\DeclareMathAlphabet{\mathpzc}{OT1}{pzcm}{mb}{it}
\newtheorem{thm}{Theorem}[section]
\newtheorem{cor}[thm]{Corollary}
\newtheorem{lem}[thm]{Lemma}
\newtheorem{pro}[thm]{Proposition}
\newtheorem{exa}{Example}
\theoremstyle{definition}
\newtheorem{defn}[thm]{Definition}
\newtheorem{rem}{Remark}
\newcommand{\lie}[2]{\ensuremath{[\![ #1,#2 ]\!]}}
\newcommand{\se}{\Upgamma (\Uplambda}
\newcommand{\dd}{\mathrm{d}}
\title[Lie algebroids and cohomology operators]
      {Lie algebroids generated by cohomology operators}
\author[D Garc\'ia-Beltr\'an, J A Vallejo and Yu. Vorobiev]{}
\subjclass{Primary: 58F15, 58F17; Secondary: 53C35.}
 \keywords{Lie algebroids, cohomology operators, product structures, complex structures, tangent structures,
 sprays}
 \email{dennise.gb@fc.uaslp.mx}
 \email{jvallejo@fc.uaslp.mx}
 \email{yurimv@guaymas.uson.mx}
\thanks{The second author was partially supported by a CONACyT project CB-179115}
\begin{document}
\maketitle

\centerline{\scshape Dennise Garc\'ia-Beltr\'an}
\medskip
{\footnotesize
 \centerline{Departamento de Matem\'aticas, Universidad de Sonora}
   \centerline{Blvd. Encinas y Rosales, Edificio 3K-1}
   \centerline{Hermosillo, Son 83000, M\'exico}
} 

\medskip

\centerline{\scshape Jos\'e A. Vallejo}
\medskip
{\footnotesize
 \centerline{Facultad de Ciencias, Universidad Aut\'onoma de San Luis Potos\'i}
   \centerline{Lat. Av. Salvador Nava s/n Col. Lomas}
   \centerline{San Luis Potos\'i, SLP 78290, M\'exico}
}

\medskip

\centerline{\scshape Yurii Vorobiev}
\medskip
{\footnotesize
 \centerline{Departamento de Matem\'aticas, Universidad de Sonora}
   \centerline{Blvd. Encinas y Rosales, Edificio 3K-1}
   \centerline{Hermosillo, Son 83000, M\'exico}
} 

\bigskip


\begin{abstract}
By studying the Fr\"olicher-Nijenhuis decomposition of cohomology operators (that is, 
derivations $D$ of the exterior algebra $\Omega (M)$ with $\mathbb{Z}-$degree $1$ and $D^2=0$),
we describe new examples of Lie algebroid structures on the tangent bundle $TM$ 
(and its complexification $T^{\mathbb{C}}M$)
constructed from pre-existing geometric ones such as complex, product or tangent structures.
We also describe a class of Lie algebroids on tangent bundles associated to idempotent endomorphisms with nontrivial Nijenhuis torsion. 
\end{abstract}

\section{Introduction}

In this paper, we present an algebraic approach to the study of the relationship between Lie algebroids and cohomology operators which is based on the Fr\"olicher-Nijenhuis calculus (various approaches to this issue can be found, for example, in \cite{cramoe04, gra1301, kosmag90, mac05}).\\
Our idea comes from the relation between the exterior differential and the Lie bracket of vector fields on a manifold $M$.
As a first-order differential operator on the (sections of) exterior algebra, 
$\Omega (M)=\Upgamma \Uplambda T^*M$, the exterior differential can be characterized by giving
its action on generators:
$$
\begin{dcases}
\mathrm{d}f(X)=Xf, \\
\mathrm{d}\alpha (X,Y)=X\alpha (Y)-Y\alpha (X)-\alpha ([X,Y]),
\end{dcases}
$$
where $f\in\mathcal{C}^\infty (M),\alpha\in\Omega (M),X,Y\in\mathcal{X}(M)$ are arbitrary. 
The action is then extended to the whole of $\Omega (M)$ as a derivation of $\mathbb{Z}-$degree $1$.
Notice that the cohomology property $\mathrm{d}^2=0$ is equivalent to the Jacobi identity for $[\, ,\, ]$.\\
This setup allows us to reverse the definitions. Starting from the exterior differential $\mathrm{d}$
on $\Omega (M)$, we could define the Lie bracket on $\mathcal{X}(M)$ as follows: given $X,Y\in\mathcal{X}(M)$,
their bracket is the unique element $[X,Y]\in\mathcal{X}(M)$ such that, 
$$
\alpha ([X,Y])=X\alpha(Y)-Y\alpha (X)-\mathrm{d}\alpha (X,Y),
$$
\emph{for any }$\alpha\in\Omega^1(M)$.
In particular, if $\alpha=\mathrm{d}f$, for $f\in\mathcal{C}^\infty(M)$, 
the preceding formula along with $\mathrm{d}^2=0$ gives
$$
[X,Y]f=\mathrm{d}f([X,Y])=X\mathrm{d}f(Y)-Y\mathrm{d}f(X)= X(Yf)-Y(Xf),
$$
so the defined bracket certainly coincides with the Lie one. Again, 
the Jacobi identity is readily seen to be equivalent to the coboundary condition $\mathrm{d}^2=0$.

The correspondence between brackets and cohomology operators can be extended to the setting of Lie algebroids.
Recall that a Lie algebroid on a manifold $M$ consists of a vector bundle $E$ over $M$, together with a vector bundle map
$q:E\to TM$ over $M$ (called the anchor map), and a Lie bracket on sections $[\![\, ,\, ]\!]:\Upgamma E\times
\Upgamma E\to \Upgamma E$ satisfying the Leibniz rule
$$
[\![A,fB]\!]=f[\![A,B]\!]+qA(f)B,
$$
for all $A,B\in\Upgamma E$, $f\in\mathcal{C}^\infty (M)$. It can be thought as a replacement of the tangent bundle
$TM$, joint with the Lie bracket on vector fields, by the new bundle $E$ and the bracket 
$[\![\, ,\, ]\!]$ on its sections. For a
comprehensive reference on Lie algebroids, see \cite{mac05}.
The relevance of Lie algebroids in Mechanics is
explained in detail in works such as \cite{wei96,cle01,mar07} and references therein.

We first describe a class of Lie algebroids arising from an idempotent endomorphism with nontrivial Nijenhuis torsion, and then construct new examples of Lie algebroids. More
precisely, the paper is organized as follows: Section \ref{sec1} of this paper contains
a brief r\'esum\'e of the Fr\"olicher-Nijenhuis calculus 
for derivations of the exterior algebra over a vector bundle. In section \ref{sec2} we will recall 
how, given a Lie algebroid $E$ on $M$, one can construct a cohomology operator on 
$\Upgamma \Uplambda E^*$, and study its Fr\"olicher-Nijenhuis decomposition. It turns out 
that this decomposition can be described within the framework of Lie algebroid
connections \cite{fer02,mac05,blao06}.
In section \ref{sec3} we will follow the reverse path, constructing Lie algebroids on the tangent 
bundle $E=TM$ from a given cohomology operator $D$ on $\Omega (M)$. Our main tool here will be 
again the Fr\"olicher-Nijenhuis decomposition of derivations, which we apply in section \ref{sec4} 
to the case of idempotent endomorphisms with nonzero Nijenhuis decomposition of the tangent bundle. 
In section \ref{sec5}, on the framework of our approach, we discuss Lie algebroids associated to regular foliations.
Finally, sections \ref{sec6} to \ref{sec8} are devoted to introducing
new examples of Lie algebroids canonically associated to a generalized foliation, and to
complex, product and tangent manifolds
(the latter, with the aid of a connection defined through a semispray).

\section{The Fr\"olicher-Nijenhuis decomposition}\label{sec1}
The proofs of the results stated in this section are slight generalizations of those corresponding
to the real case, which can be found, e.g., in \cite{mic87} or \cite{ksm94}.

Let $M$ be a differential manifold with tangent bundle $TM$. Its complexified bundle is then $T_{\mathbb{C}}M=TM\oplus iTM$. Complex vector fields are of the form $Z=X+iY$, with 
$X,Y\in\Upgamma TM$, and complex $1-$forms are constructed as the duals
$\Omega^1_{\mathbb{C}}(M)=\Upgamma (T_{\mathbb{C}}^*M)\simeq (\Upgamma T^*M)\oplus i(\Upgamma T^*M) 
\simeq\Omega^1(M)\oplus i\Omega^1(M)$. By taking exterior products we obtain the complex $k-$forms
$\Omega_{\mathbb{C}}^k(M)=\Upgamma(\Uplambda^k T_{\mathbb{C}}^*M)\simeq\Omega^k(M)\oplus i\Omega^k(M)$,
and tensoring by $T_{\mathbb{C}}M$ we get vector-valued complex $p-$forms,
$\Omega_{\mathbb{C}}^p(M;TM)=\Upgamma(\Uplambda^p T_{\mathbb{C}}^*M\otimes T_{\mathbb{C}}M)$.
We will denote $\Omega_{\mathbb{C}}(M)=\overset{\infty}{\underset{k=0}{\oplus}} \Omega^{k}_{\mathbb{C}}(M)$.

A derivation of degree $l\in\mathbb{Z}$ is a $\mathbb{C}-$endomorphism of $\Omega_{\mathbb{C}}(M)$
such that 
\begin{equation*}
D:\Omega^{k}_{\mathbb{C}}(M)\rightarrow\Omega^{k+l}_{\mathbb{C}}(M) ,
\end{equation*}
and, for any homogeneous form $\alpha$ and arbitrary $\beta$,
$$
D(\alpha\wedge\beta)=D(\alpha)\wedge\beta+(-1)^{l|\alpha|}\alpha\wedge\beta ,
$$
where $|\alpha|$ denotes the degree of $\alpha$. The vector space (and 
$\mathcal{C}^\infty (M)-$module) of such derivations is denoted 
$\mathrm{Der}^{l}\,\Omega_{\mathbb{C}}(M)$, and then
$\mathrm{Der}\,\Omega_{\mathbb{C}}(M)=
\oplus_{l\in\mathbb{Z}}\mathrm{Der}^{l}\,\Omega_{\mathbb{C}}(M)$ 
is a $\mathbb{Z}-$graded Lie algebra endowed
with the (graded) commutator of derivations
\begin{equation*}
[D_1,D_2]=D_1\circ D_2-(-1)^{|D_1||D_2|}D_2\circ D_1
\end{equation*} 
where $|D_1|$, $|D_2|$ denote the degree of $D_1$, $D_2$, respectively.

Any complex vector-valued $(k+1)-$form $K\in \Omega^{k+1}_{\mathbb{C}}(M;TM)$, defines
a derivation of degree $k$, called the insertion of $K$ and denoted $\imath_K$: if
$\omega\in \Omega^{p}_{\mathbb{C}}(M)$ and $Z_j\in\Upgamma(TM\oplus iTM)$, with $j=1,...,k+p$, then
\begin{align*}
& \imath_K\omega(Z_1,\cdots ,Z_{k+p}) \\
& =\sum_{\sigma\in S_{k+1,p-1}} \mathrm{sgn}(\sigma)\omega(K(Z_{\sigma(1)},\cdots,Z_{\sigma(k+1)}),Z_{\sigma(k+2)},\cdots,Z_{\sigma(k+p)}),
\end{align*}
where $S_{k+1,p-1}$ are shuffle permutations and $\mathrm{sgn}$ denotes the signature.

A derivation $D\in \mathrm{Der}\,\Omega_{\mathbb{C}}(M)$ such that it vanishes on functions, 
$D(f)=0$ for all $f\in \Omega^{0}_{\mathbb{C}}(M)$, is called a tensorial (or algebraic) 
derivation. They all are insertions.
\begin{pro}\label{insertions}
Let $D\in \mathrm{Der}^{k}\,\Omega_{\mathbb{C}}(M)$ be a tensorial derivation. 
Then, there exists a unique vector-valued complex form $K\in \Omega^{k+1}_{\mathbb{C}}(M;TM)$ 
such that
$$
D=\imath_K.
$$
\end{pro}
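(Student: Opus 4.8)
The plan is to exploit the hypothesis of tensoriality to turn $D$ into a bundle map, recognize that map as the sought $K$, and then verify equality by comparing derivations on algebra generators. First I would observe that the condition $D(f)=0$ for $f\in\Omega^0_{\mathbb{C}}(M)$, together with the graded Leibniz rule, forces $D$ to be $\Omega^0_{\mathbb{C}}(M)$-linear: setting $\alpha=f$ of degree $0$ in $D(\alpha\wedge\beta)=D\alpha\wedge\beta+(-1)^{k|\alpha|}\alpha\wedge D\beta$ yields $D(f\beta)=fD\beta$. Hence $D$ is a tensorial, and in particular local (indeed pointwise), operator. This reduction is what makes everything that follows legitimate pointwise.

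Next I would restrict $D$ to degree-one forms, obtaining an $\Omega^0_{\mathbb{C}}(M)$-linear map $D|_{\Omega^1_{\mathbb{C}}(M)}\colon\Omega^1_{\mathbb{C}}(M)\to\Omega^{k+1}_{\mathbb{C}}(M)$, i.e. a module homomorphism between sections of $T^*_{\mathbb{C}}M$ and of $\Uplambda^{k+1}T^*_{\mathbb{C}}M$. Invoking the standard identification of $\Omega^0_{\mathbb{C}}(M)$-linear maps of sections with vector bundle morphisms,
$$
\mathrm{Hom}_{\Omega^0_{\mathbb{C}}(M)}\bigl(\Upgamma T^*_{\mathbb{C}}M,\ \Upgamma\Uplambda^{k+1}T^*_{\mathbb{C}}M\bigr)\cong\Upgamma\bigl(T_{\mathbb{C}}M\otimes\Uplambda^{k+1}T^*_{\mathbb{C}}M\bigr)=\Omega^{k+1}_{\mathbb{C}}(M;TM),
$$
I would let $K\in\Omega^{k+1}_{\mathbb{C}}(M;TM)$ be the unique element corresponding to $D|_{\Omega^1_{\mathbb{C}}(M)}$, characterized by $\alpha\bigl(K(Z_1,\dots,Z_{k+1})\bigr)=(D\alpha)(Z_1,\dots,Z_{k+1})$. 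Comparing this with the insertion formula in the case $p=1$, where the shuffle set $S_{k+1,0}$ is trivial, shows that $\imath_K\alpha=D\alpha$ on all of $\Omega^1_{\mathbb{C}}(M)$. Since an insertion is recovered from its action on $1$-forms by this very pairing, the candidate $K$ is forced, which already gives uniqueness.

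Finally I would check that $D=\imath_K$ on the whole of $\Omega_{\mathbb{C}}(M)$. Both are derivations of degree $k$; both annihilate $\Omega^0_{\mathbb{C}}(M)$ (for $\imath_K$ because the shuffle set $S_{k+1,-1}$ is empty); and they coincide on $\Omega^1_{\mathbb{C}}(M)$ by construction. As $\Omega_{\mathbb{C}}(M)$ is generated as an $\Omega^0_{\mathbb{C}}(M)$-algebra by elements of degrees $0$ and $1$, and both operators are $\Omega^0_{\mathbb{C}}(M)$-linear derivations, repeated application of the graded Leibniz rule makes them agree on every locally decomposable form $f\,\alpha_1\wedge\cdots\wedge\alpha_p$. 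I expect the \emph{main obstacle} to lie precisely in this last reduction: a $p$-form is not globally a single decomposable product, so one must pass to a local coframe and then patch the resulting pointwise identity into a global one, which is exactly where the locality established in the first step is indispensable. The identification of linear maps with bundle morphisms used in the second step is the other place where one tacitly relies on tensorial objects being determined pointwise.
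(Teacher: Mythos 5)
Your proof is correct and follows the same route the paper intends: the paper omits a detailed argument (deferring to \cite{mic87,ksm94}) but indicates exactly this construction when it remarks that $K$ is obtained by letting $D$ act on $1$-forms, and your steps (tensoriality $\Rightarrow$ $\mathcal{C}^\infty$-linearity $\Rightarrow$ a bundle map on $1$-forms defining $K$, then agreement of the two derivations on local generators of degrees $0$ and $1$) are the standard completion of that sketch. Nothing further is needed.
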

The $K$ in this proposition can be obtained simply by making $D$ act on complex valued
$1-$forms $\mathrm{d}f$.
If $K\in \Omega^{k+1}_{\mathbb{C}}(M;TM)$, $L\in \Omega^{l+1}_{\mathbb{C}}(M;TM)$, their 
corresponding insertions are $\imath _K\in \mathrm{Der}^{k}\,\Omega_{\mathbb{C}}(M)$, 
$\imath _L\in Der^{l}(\Omega_{\mathbb{C}}(M))$, so their graded commutator is again a derivation  
$[\imath _K,\imath _L]\in \mathrm{Der}^{k+l}\,\Omega_{\mathbb{C}}(M)$. Moreover, this new
derivations is obviously tensorial so, applying proposition \ref{insertions}, there exists a
unique vector-valued complex $(k+l+1)-$form determined by it.
\begin{defn}
Given $K\in \Omega^{k+1}_{\mathbb{C}}(M;TM)$, $L\in \Omega^{l+1}_{\mathbb{C}}(M;TM)$, their Richardson-Nijenhuis bracket is defined as the element $[K,L]_{RN}\in \Omega^{k+l+1}_{\mathbb{C}}(M;TM)$ such that
\begin{equation*}
[\imath_K,\imath _L]=\imath_{[K,L]_{RN}}.
\end{equation*}
\end{defn}

The bracket of vector fields on $M$ can be clearly extended by $\mathbb{C}-$linearity to
complex vector fields; the same occurs with the exterior differential $\mathrm{d}$, which can
be extended by $\mathbb{C}-$linearity to a derivation $\mathrm{d}\in Der^1\,\Omega_{\mathbb{C}}(M)$
(we will use the same notation for $\mathrm{d}$ and this extension). Then, given a 
$K\in  \Omega^{k+1}_{\mathbb{C}}(M;TM)$, as $\imath_K\in Der^{k}\,\Omega_{\mathbb{C}}(M)$ and
$\mathrm{d}\in Der^1\,\Omega_{\mathbb{C}}(M)$, their graded commutator will be again a derivation, called the Lie derivative along $K$,
\begin{equation*}
\mathcal{L}_K:=[\imath_K,\mathrm{d}]=\imath_K\circ\mathrm{d}-(-1)^{k}\mathrm{d}\circ\imath_K .
\end{equation*}
As a consequence of the Jacobi identity for the graded commutator of derivations, and the
nilpotency of $\mathrm{d}$, we get the following.
\begin{pro}
For any $K\in\Omega_{\mathbb{C}}(M;TM)$, the graded commutator of the derivations $\mathcal{L}_K$ 
and $\mathrm{d}$ vanishes.
\end{pro}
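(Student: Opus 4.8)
The plan is to prove that $[\mathcal{L}_K, \mathrm{d}] = 0$ by exploiting the definition $\mathcal{L}_K = [\imath_K, \mathrm{d}]$ together with the graded Jacobi identity for the commutator of derivations and the nilpotency $\mathrm{d}^2 = 0$, exactly as the text anticipates. First I would recall that $\mathrm{Der}\,\Omega_{\mathbb{C}}(M)$ is a $\mathbb{Z}$-graded Lie algebra under the graded commutator, so it satisfies the graded Jacobi identity
\begin{equation*}
(-1)^{|A||C|}[A,[B,C]] + (-1)^{|B||A|}[B,[C,A]] + (-1)^{|C||B|}[C,[A,B]] = 0
\end{equation*}
for homogeneous derivations $A,B,C$. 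I would apply this with $A = \imath_K$ (degree $k$), $B = \mathrm{d}$ (degree $1$), and $C = \mathrm{d}$ (degree $1$).

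With these choices, the term $[\mathrm{d},[\mathrm{d},\imath_K]]$ and the term $[\mathrm{d},[\imath_K,\mathrm{d}]]$ both appear, while the remaining term involves $[\imath_K,[\mathrm{d},\mathrm{d}]]$. The key observation is that $[\mathrm{d},\mathrm{d}] = 2\,\mathrm{d}\circ\mathrm{d} = 0$ because $\mathrm{d}$ has odd degree $1$ (so the graded commutator of $\mathrm{d}$ with itself is the anticommutator, which is twice $\mathrm{d}^2$) and $\mathrm{d}$ is nilpotent. Hence the term containing $[\imath_K,[\mathrm{d},\mathrm{d}]]$ vanishes, and the surviving two terms are each (up to sign) equal to $[\mathrm{d},[\imath_K,\mathrm{d}]] = [\mathrm{d},\mathcal{L}_K]$. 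Tracking the signs $(-1)^{|A||C|}$, $(-1)^{|B||A|}$, $(-1)^{|C||B|}$ carefully for the degrees involved, the Jacobi identity collapses to a relation forcing $[\mathrm{d},\mathcal{L}_K] = 0$, and therefore $[\mathcal{L}_K,\mathrm{d}] = 0$ by antisymmetry of the bracket.

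I expect the only delicate point to be the bookkeeping of the graded signs: one must be careful that two of the three Jacobi terms coincide and reinforce rather than cancel, which hinges on the parity of $k$ appearing in the sign factors. A clean way to sidestep any sign ambiguity is to argue directly: since $\mathcal{L}_K = \imath_K\circ\mathrm{d} - (-1)^k\,\mathrm{d}\circ\imath_K$, I would compute $\mathcal{L}_K\circ\mathrm{d}$ and $\mathrm{d}\circ\mathcal{L}_K$ by substitution, expand both, and use $\mathrm{d}^2 = 0$ to cancel the surviving terms pairwise. The main obstacle is therefore purely organizational rather than conceptual; no geometric input beyond $\mathrm{d}^2=0$ and the graded Lie algebra structure is needed, so the argument is entirely formal and applies verbatim to the complexified setting.
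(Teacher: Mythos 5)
Your proposal is correct and follows exactly the route the paper indicates (it states the proposition "as a consequence of the Jacobi identity for the graded commutator of derivations, and the nilpotency of $\mathrm{d}$" without writing out the details): the graded Jacobi identity applied to $\imath_K,\mathrm{d},\mathrm{d}$ kills the $[\imath_K,[\mathrm{d},\mathrm{d}]]$ term and leaves $-2[\mathrm{d},\mathcal{L}_K]=0$, so the two surviving terms do reinforce as you anticipate. Your fallback direct expansion of $\mathcal{L}_K\circ\mathrm{d}$ and $\mathrm{d}\circ\mathcal{L}_K$ also checks out, so there is nothing to fix.
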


Thus, there exist tensorial derivations, of type $\imath_K$ for $K\in\Omega^{k}_{\mathbb{C}}(M;TM)$, and derivations commuting with the exterior differential $\mathrm{d}$, such as Lie
derivatives. The Fr\"olicher-Nijenhuis decomposition theorem states that any other derivation
is a sum of two of these.
\begin{thm}[Fr\"olicher-Nijenhuis]\label{teofn}
Let $D\in Der^{k}\,\Omega_{\mathbb{C}}(M)$. Then, there exists a unique couple $(K,L)$,
$K\in \Omega^{k}_{\mathbb{C}}(M;TM)$ and $L\in \Omega^{k+1}_{\mathbb{C}}(M;TM)$,
such that
\begin{equation*}
D=\mathcal{L}_K+\imath_L.
\end{equation*}
\end{thm}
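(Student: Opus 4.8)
The plan is to prove existence and uniqueness separately, the guiding principle being that a Lie derivative is completely flexible on functions while an insertion is invisible there. So one first subtracts off a Lie derivative $\mathcal{L}_K$ chosen to agree with $D$ on $\Omega^0_{\mathbb{C}}(M)$, and then argues that the remainder $D-\mathcal{L}_K$ is tensorial, at which point Proposition \ref{insertions} finishes the job.

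First I would record the behaviour of the candidate pieces on functions. Since $K\in\Omega^k_{\mathbb{C}}(M;TM)$ gives an insertion $\imath_K$ of degree $k-1$, and $\imath_K$ annihilates $0$-forms, the definition $\mathcal{L}_K=[\imath_K,\mathrm{d}]$ collapses on a function $f$ to $\mathcal{L}_K f=\imath_K\,\mathrm{d}f$. On the other hand, the graded Leibniz rule with $|f|=0$ shows that $D|_{\Omega^0_{\mathbb{C}}(M)}$ is an ordinary derivation $\mathcal{C}^\infty_{\mathbb{C}}(M)\to\Omega^k_{\mathbb{C}}(M)$ into the module of $k$-forms, and such a derivation factors through the universal derivation $\mathrm{d}$: there is a unique $\mathcal{C}^\infty$-linear map $\varphi\colon\Omega^1_{\mathbb{C}}(M)\to\Omega^k_{\mathbb{C}}(M)$ with $\varphi\circ\mathrm{d}=D|_{\Omega^0_{\mathbb{C}}(M)}$. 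Identifying $\mathrm{Hom}_{\mathcal{C}^\infty}(\Omega^1_{\mathbb{C}}(M),\Omega^k_{\mathbb{C}}(M))\simeq\Upgamma(\Uplambda^k T^*_{\mathbb{C}}M\otimes T_{\mathbb{C}}M)=\Omega^k_{\mathbb{C}}(M;TM)$, and noting that on $1$-forms the associated contraction is precisely $\alpha\mapsto\imath_K\alpha$, this $\varphi$ corresponds to a unique $K\in\Omega^k_{\mathbb{C}}(M;TM)$ with $\imath_K\,\mathrm{d}f=Df$, i.e.\ $\mathcal{L}_K f=Df$ for all $f$. Then $D-\mathcal{L}_K$ vanishes on functions, hence is tensorial, and Proposition \ref{insertions} supplies a unique $L\in\Omega^{k+1}_{\mathbb{C}}(M;TM)$ with $D-\mathcal{L}_K=\imath_L$, giving the decomposition.

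For uniqueness, suppose $\mathcal{L}_K+\imath_L=\mathcal{L}_{K'}+\imath_{L'}$. Evaluating on a function $f$ kills the insertions and leaves $\imath_K\,\mathrm{d}f=\imath_{K'}\,\mathrm{d}f$; since, as noted after Proposition \ref{insertions}, an insertion is recovered from its action on the $\mathrm{d}f$, this forces $K=K'$, after which $\imath_L=\imath_{L'}$ and the uniqueness clause of Proposition \ref{insertions} gives $L=L'$. I expect the main obstacle to be the existence step, and specifically the claim that the function-level derivation $D|_{\Omega^0_{\mathbb{C}}(M)}$ factors through $\mathrm{d}$ via a tensorial (that is, $\mathcal{C}^\infty$-linear) object, together with the verification that this object is realized by an insertion on $1$-forms; everything else is formal manipulation of the graded commutator. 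One should also check that passage to the complexification $T_{\mathbb{C}}M$ introduces nothing beyond $\mathbb{C}$-linear extension, which the excerpt already flags as routine.
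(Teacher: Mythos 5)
Your proposal is correct and follows essentially the same route the paper takes: the paper defers the proof of Theorem \ref{teofn} to the references but spells out the identical argument for its connection-based analogue in the next section (determine $K$ from the derivation $f\mapsto Df$, observe that $D-\mathcal{L}_K$ is tensorial, and invoke the uniqueness of insertions). Your packaging of the first step via the universal property of $\mathrm{d}$ and the identification $\mathrm{Hom}_{\mathcal{C}^\infty}(\Omega^1_{\mathbb{C}}(M),\Omega^k_{\mathbb{C}}(M))\simeq\Omega^k_{\mathbb{C}}(M;TM)$ is just a reformulation of the paper's pointwise construction of $K$, and your uniqueness argument matches as well.
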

A useful consequence is the following result.
\begin{cor}\label{corA}
Let $D\in \mathrm{Der}^{k}\,\Omega_{\mathbb{C}}(M)$. Then
\begin{enumerate}
\item $D$ is tensorial if and only if $K=0$.
\item $D$ commutes with $\mathrm{d}$ if and only if $L=0$.
\end{enumerate}
\end{cor}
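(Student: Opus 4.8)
The plan is to derive both equivalences from the uniqueness clause of the Fr\"olicher--Nijenhuis decomposition (Theorem~\ref{teofn}), using only the definition $\mathcal{L}_K=[\imath_K,\mathrm{d}]$, Proposition~\ref{insertions}, and the preceding Proposition asserting that the graded commutator $[\mathcal{L}_K,\mathrm{d}]$ vanishes. Throughout I read ``commutes with $\mathrm{d}$'' in the graded sense, i.e.\ $[D,\mathrm{d}]=0$, and I take $D=\mathcal{L}_K+\imath_L$ with the pair $(K,L)$ the one furnished, and uniquely determined, by Theorem~\ref{teofn}.

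For part~(1), the reverse implication is immediate: if $K=0$ then $D=\imath_L$, and any insertion vanishes on $\Omega^{0}_{\mathbb{C}}(M)$ (the insertion formula produces nothing when evaluated on a $0$-form), so $D$ is tensorial. For the forward implication, suppose $D$ is tensorial; Proposition~\ref{insertions} then yields a vector-valued form $M$ with $D=\imath_M$. Since $\imath_0=0$ forces $\mathcal{L}_0=[\imath_0,\mathrm{d}]=0$, the identity $D=\mathcal{L}_0+\imath_M$ is itself a Fr\"olicher--Nijenhuis decomposition; comparing it with $D=\mathcal{L}_K+\imath_L$ and invoking uniqueness gives $K=0$ (and $L=M$).

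For part~(2), the reverse implication follows directly from the preceding Proposition: if $L=0$ then $D=\mathcal{L}_K$, whose graded commutator with $\mathrm{d}$ vanishes. The forward implication rests on the computation
\begin{equation*}
[D,\mathrm{d}]=[\mathcal{L}_K,\mathrm{d}]+[\imath_L,\mathrm{d}]=0+\mathcal{L}_L=\mathcal{L}_L,
\end{equation*}
where I have used bilinearity of the graded commutator in its first slot, the Proposition for the first term, and the definition of the Lie derivative for the second. Hence $D$ commutes with $\mathrm{d}$ if and only if $\mathcal{L}_L=0$, and it remains to pass from $\mathcal{L}_L=0$ to $L=0$.

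This last step is the only genuine obstacle, and it too succumbs to uniqueness: $\mathcal{L}_L=\mathcal{L}_L+\imath_0$ is the Fr\"olicher--Nijenhuis decomposition of $\mathcal{L}_L$ with pair $(L,0)$, while the zero derivation has decomposition $(0,0)$; if $\mathcal{L}_L=0$ these pairs must coincide, forcing $L=0$. Alternatively, one verifies injectivity of $L\mapsto\mathcal{L}_L$ directly by evaluating on functions, since $\mathcal{L}_L f=\imath_L(\mathrm{d}f)$ recovers the contraction of $L$ against $\mathrm{d}f$, whose vanishing for every $f$ forces $L=0$. Either route closes the argument.
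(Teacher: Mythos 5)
Your proof is correct. The paper itself omits the argument (deferring to the references for this section), but what you give is exactly the intended one: both directions reduce to the uniqueness clause of Theorem~\ref{teofn} together with the observations that $\imath_L$ kills functions, that $[\mathcal{L}_K,\mathrm{d}]=0$, and that $[D,\mathrm{d}]=\mathcal{L}_L$ with $L\mapsto\mathcal{L}_L$ injective (as you verify by evaluating $\mathcal{L}_Lf=\imath_L\,\mathrm{d}f$), so there is nothing to add.
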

\begin{exa}
The Fr\"olicher-Nijenhuis decomposition of the exterior differential is 
$$
\mathrm{d}=\mathcal{L}_{\mathrm{Id}},
$$
(where $\mathrm{Id}:TM\to TM$ is the identity morphism) as it should be for a derivation that 
commutes with $\mathrm{d}$.
\end{exa}
If $K\in \Omega^{k}_{\mathbb{C}}(M;TM)$ and $L\in \Omega^{l}_{\mathbb{C}}(M;TM)$, 
then $[\mathcal{L}_K,\mathcal{L}_L]$ is again a derivation and it commutes with $\mathrm{d}$, 
because of the Jacobi identity. Thus, according to Corollary \ref{corA}, it must be of the form 
$\mathcal{L}_{R} \in Der^{k+l}(\Omega_{\mathbb{C}}(M;TM))$ for a unique 
$R\in \Omega^{k+l}(\Omega_{\mathbb{C}}(M;TM))$.
\begin{defn}
Given $K\in \Omega^{k}_{\mathbb{C}}(M;TM)$ and $L\in \Omega^{l}_{\mathbb{C}}(M;TM)$, their Fr\"olicher-Nijenhuis bracket is the unique element 
$[K,L]_{FN}\in \Omega^{k+l}_{\mathbb{C}}(M;TM)$ such that
\begin{equation*}
[\mathcal{L}_K,\mathcal{L}_L]=\mathcal{L}_{[K,L]_{FN}}.
\end{equation*}
\end{defn}
Along with the Fr\"olicher-Nijenhuis bracket we have the notion of Nijenhuis torsion: if
$N\in\Omega^{1}_{\mathbb{C}}(M;TM)$ is a vector-valued $1-$form, it is defined as
the vector-valued $2-$form $T_N=\frac{1}{2}[N,N]_{FN}$. Explicitly, we have
\begin{equation}\label{torsion}
T_N(X,Y)=[NX,NY]-N[NX,Y]-N[X,NY]+N^2[X,Y],
\end{equation}
for any $X,Y\in\mathcal{X}(M)$.

\section{From Lie algebroids to cohomology operators}\label{sec2}
Suppose that $(E,q,[\![\, ,\, ]\!])$ is a Lie algebroid. Let us define an operator 
$D:\Upgamma(\Uplambda^p E^*)\to\Upgamma(\Uplambda^{p+1}E^*)$ by putting
\begin{equation}\label{defd}
\begin{cases}
Df(A)=qA(f) \\
D\alpha (A,B)=qA(\alpha (B))-qB(\alpha (A))-\alpha (\lie{A}{B}),
\end{cases}
\end{equation}
for any $f\in\mathcal{C}^\infty (M),\alpha\in\Upgamma(\Uplambda^1 E^*),A,B\in\Upgamma E$, and extending its
action to $\Upgamma(\Uplambda^\bullet E^*)$ as a derivation of $\mathbb{Z}-$degree $1$.
\begin{pro}\label{AC}
$D$ is a cohomology operator, that is,
$$
D^2=\frac{1}{2}[D,D]=0 ,
$$
where $[\, ,\,]$ denotes the graded commutator of derivations.
\end{pro}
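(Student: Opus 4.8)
The plan is to use that $D^2$ is itself a derivation, so that it is enough to test it on a generating set of the algebra. Since $D$ has $\mathbb{Z}$-degree $1$, the graded commutator gives
\begin{equation*}
[D,D]=D\circ D-(-1)^{1\cdot 1}D\circ D=2D^2,
\end{equation*}
and the graded commutator of two derivations is again a derivation; hence $D^2=\tfrac12[D,D]$ is a derivation of degree $2$. The exterior algebra $\Upgamma(\Uplambda^\bullet E^*)$ is generated over $\mathcal{C}^\infty(M)$ by its elements of degree $0$ (functions) and degree $1$ (sections of $E^*$), and a derivation annihilating a generating set annihilates the whole algebra. Thus it suffices to check $D^2f=0$ and $D^2\alpha=0$ for arbitrary $f\in\mathcal{C}^\infty(M)$ and $\alpha\in\Upgamma(\Uplambda^1 E^*)$.

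Before this I would record the compatibility of the anchor with the brackets,
\begin{equation*}
q\lie{A}{B}=[qA,qB],\qquad A,B\in\Upgamma E,
\end{equation*}
which is not an extra hypothesis but a consequence of the axioms: applying the Jacobi identity of $\lie{\ }{\ }$ to the triple $A,B,fC$ and expanding every bracket with the Leibniz rule, the terms proportional to $f$ cancel by Jacobi and those proportional to $qA(f)$ or $qB(f)$ cancel pairwise, leaving $\big(qA(qB(f))-qB(qA(f))-q\lie{A}{B}(f)\big)C=0$ for all $f$, which is exactly the stated identity.

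The degree-$0$ test is then immediate. From (\ref{defd}) the $1$-form $Df$ satisfies $Df(A)=qA(f)$, so applying the second line of (\ref{defd}) to $Df$,
\begin{equation*}
D^2f(A,B)=qA(qB(f))-qB(qA(f))-q\lie{A}{B}(f),
\end{equation*}
and this vanishes by the anchor compatibility. For the degree-$1$ test I would first write the formula that (\ref{defd}) and the derivation property induce on a $2$-form $\omega$,
\begin{align*}
D\omega(A,B,C)={}& qA(\omega(B,C))-qB(\omega(A,C))+qC(\omega(A,B))\\
& -\omega(\lie{A}{B},C)+\omega(\lie{A}{C},B)-\omega(\lie{B}{C},A),
\end{align*}
and substitute $\omega=D\alpha$, expanding each $D\alpha(\cdot,\cdot)$ by the second line of (\ref{defd}). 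The resulting expression for $D^2\alpha(A,B,C)$ separates into three families of terms: those in which an anchor acts on $\alpha$ of a single bracket, which cancel in pairs; those quadratic in the anchor, which combine with the terms $q\lie{\,\cdot\,}{\,\cdot\,}(\alpha(\cdot))$ and cancel by the compatibility $q\lie{A}{B}=[qA,qB]$; and those in which $\alpha$ is evaluated on a twice-iterated bracket, which after using antisymmetry add up to $\alpha\big(\lie{\lie{A}{B}}{C}+\lie{\lie{B}{C}}{A}+\lie{\lie{C}{A}}{B}\big)$ and vanish by the Jacobi identity.

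With $D^2f=0$ and $D^2\alpha=0$ established, the derivation property forces $D^2=0$ on all of $\Upgamma(\Uplambda^\bullet E^*)$, proving the claim. The genuinely laborious step is the degree-$1$ verification: tracking the numerous anchor and bracket contributions is where mistakes are easy, and it is there that both the Jacobi identity and the anchor homomorphism enter essentially---mirroring the classical equivalence, recalled in the Introduction, between $\dd^2=0$ and the Jacobi identity for vector fields.
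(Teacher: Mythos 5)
Your proof is correct. Note that the paper itself offers no proof of this proposition --- it only points to chapter 7 of \cite{mac05} --- so there is no internal argument to compare against; yours is the standard direct verification and it is sound. Two points are worth singling out as the real content of your write-up. First, you rightly observe that the compatibility $q\lie{A}{B}=[qA,qB]$ need not be postulated: with the paper's definition of a Lie algebroid (Jacobi identity plus Leibniz rule only) it follows from evaluating the Jacobi identity on the triple $A,B,fC$, exactly as you do, and it is precisely this identity that kills the double-anchor terms in the degree-$1$ check. Second, the reduction to testing $D^2$ on functions and on sections of $E^*$ is legitimate because $D^2=\tfrac12[D,D]$ is again a derivation and $\Upgamma(\Uplambda^\bullet E^*)$ is generated in degrees $0$ and $1$ locally, hence --- since such derivations are local operators --- it suffices; you might add one line making that locality point explicit. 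The bookkeeping in the degree-$1$ computation is as you describe: the terms where an anchor acts on $\alpha$ of a single bracket cancel in pairs, the second-order anchor terms cancel against the $q\lie{\cdot}{\cdot}$ contributions by the anchor morphism property, and the iterated brackets assemble into the cyclic sum annihilated by the Jacobi identity.
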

This is a very well-known construction (see chapter 7 in \cite{mac05}, for instance),
sometimes called the \emph{De Rham differential of the Lie algebroid} (because one recovers the usual
exterior differential $\mathrm{d}$, when $E=TM$ endowed with the Lie bracket on vector fields and the
identity as anchor map).
The operator $D$ is a derivation of $\Upgamma(\Uplambda^\bullet E^*)$, so it can be decomposed \`a la
Fr\"olicher-Nijenhuis. Let us see how this can be done in a particular, but instructive, case.
\begin{exa}\label{tmcase}
When $E=TM$, we have a Lie algebroid $(TM,q,[\![\, ,\, ]\!])$ and a derivation $D\in\mathrm{Der}\Omega(M)$.
Recall (see \cite{Gra0601}) that given a bilinear operation $\circ$ on the sections of a vector bundle
$E$ (such as $[\![\, ,\, ]\!]$ on $\Upgamma TM$), and a bundle endomorphism $\varphi$ 
(such as $q:\Upgamma TM\to \Upgamma TM$), the 
\emph{contracted bracket}
of the operation is given by $A\circ_\varphi B=\varphi(X)\circ Y+X\circ \varphi(Y)-\varphi(X\circ Y)$,
for $A,B\in E$. In our case, the contracted bracket of the Lie
bracket of vector fields by the anchor map of the algebroid is
\begin{equation}\label{eq2}
[X,Y]_q=[qX,Y]+[X,qY]-q[X,Y].
\end{equation}
Now, a computation using the Fr\"olicher-Nijenhuis decomposition $D=\mathcal{L}_K +i_L$, where
$K\in\Omega^1(M;TM)$ and $L\in\Omega^2(M;TM)$, shows that
$$
K=q ,
$$
and
\begin{equation}\label{L}
L(X,Y)=[X,Y]_q-\lie{X}{Y}.
\end{equation}
Indeed, we have that $K$ is given by
$$
KX(f)=Df(X)=qX(f),
$$
for each $f\in\mathcal{C}^\infty(M)$, $X\in\mathcal{X}(M)$, and also
\begin{equation}\label{eq2b}
(D-\mathcal{L}_q)(\alpha )(X,Y)=(i_L\alpha )(X,Y)=\alpha (L(X,Y)).
\end{equation}
This is enough to characterize $i_L$, as it is determined by its action
on $1-$forms (on functions it vanishes by definition): Developing the left-hand side in \eqref{eq2b},
$$
(D\alpha)(X,Y)=qX(\alpha(Y))-qY(\alpha(X))-\alpha(\lie{X}{Y}),
$$
and, from the definition of $\mathcal{L}_q\alpha$,
\begin{align*}
&(\mathcal{L}_q\alpha)(X,Y) \\
&=qX(\alpha(Y))-\alpha([qX,Y])-qY(\alpha(X))+\alpha([qY,X])+\alpha(q[X,Y]) \\
&=qX(\alpha(Y))-qY(\alpha(X))-\alpha([qX,Y])-\alpha([Xq,Y])+\alpha(q[X,Y]),
\end{align*}
thus:
$$
\alpha(L(X,Y))=\alpha([qX,Y]+[X,qY]-q[X,Y]-\lie{X}{Y}).
$$
As this is valid for arbitrary $\alpha\in\Omega^1(M)$, $X,Y\in\mathcal{X}(M)$, we get \eqref{L}.
\end{exa}

The question now is how to obtain an analog result for an arbitrary Lie algebroid $(E,q,[\![\, ,\, ]\!])$,
where $D\in\mathrm{Der}\Upgamma(\Uplambda^\bullet E^*)$ and we do not have a Lie derivative 
operator at our disposal. We can proceed by choosing a connection $\nabla$ on $E$; then,
a proof analogous to that of the classical
Fr\"olicher-Nijenhuis theorem (see \cite{monmon88}), shows that $D$ must decompose itself as
\begin{equation}\label{eq1}
D=\nabla_K +\imath_L
\end{equation}
for certain tensor fields $K\in\Upgamma (E^*\otimes TM)$ and 
$L\in\Upgamma(\Uplambda^2 E^*\otimes E)$.
We briefly describe the proof of this result here, as we will make use of some of its intermediate steps below.
\begin{thm}
Let $\pi:F\to M$ be a smooth vector bundle over the differential manifold $M$.
Let $D\in\mathrm{Der}\se F)$, and let
$\nabla$ be a linear connection on $F^*$. Then, there exist unique tensor fields $K\in\Upgamma(F\otimes TM)$ and
$L\in\Upgamma(\Uplambda^2 F\otimes F^*)$, such that
\begin{equation}\label{decomp-delta}
D=\nabla_K +\imath_L.
\end{equation}
\end{thm}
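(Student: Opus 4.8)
The plan is to imitate the proof of the classical Fr\"olicher--Nijenhuis theorem (Theorem \ref{teofn}), the only genuinely new ingredient being that the canonical Lie derivative $\mathcal{L}_K$ must be replaced by a connection-dependent operator $\nabla_K$. Throughout, $\nabla$ will denote both the given connection on $F^*$ and the dual connection it induces on $F$ and on every $\Uplambda^p F$. The first task is to pin down $\nabla_K$. Given $K\in\Upgamma(F\otimes TM)$, regarded as a bundle map $K:F^*\to TM$, I would define $\nabla_K:\Upgamma(\Uplambda^p F)\to\Upgamma(\Uplambda^{p+1}F)$ by the antisymmetrized covariant derivative
\begin{equation*}
(\nabla_K\omega)(A_0,\dots,A_p)=\sum_{i=0}^{p}(-1)^i\,(\nabla_{K A_i}\omega)(A_0,\dots,\widehat{A_i},\dots,A_p),
\end{equation*}
for $A_0,\dots,A_p\in\Upgamma F^*$, where $KA_i\in\Upgamma TM$ and $\nabla_{KA_i}$ acts through the induced connection on $\Uplambda^p F$. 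Each summand is already tensorial (the covariant derivative of a tensor is a tensor and $K$ is $\mathcal{C}^\infty(M)$-linear), so $\nabla_K\omega$ is a genuine $(p+1)$-form. The essential point is that $\nabla_K$ is a graded derivation of degree $1$: this follows by writing $\nabla_K=\mathrm{Alt}\circ(K^*\otimes\mathrm{id})\circ\nabla$, with $K^*:T^*M\to F$ the transpose of $K$, and invoking that $\nabla$ obeys the Leibniz rule on $\Upgamma(\Uplambda F)$ while $\mathrm{Alt}$ is compatible with the wedge product --- exactly as in the classical identity that expresses $\mathrm{d}$ through a torsion-free connection, except that here no torsion hypothesis is required, since I only need a derivation and not $\mathrm{d}$ itself. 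On functions the definition yields $(\nabla_K f)(A)=(KA)(f)$.

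For existence I would recover $K$ from the action of $D$ on functions. Setting $(KA)(f):=(Df)(A)$ for $A\in\Upgamma F^*$ and $f\in\mathcal{C}^\infty(M)$, the right-hand side is $\mathcal{C}^\infty(M)$-linear in $A$ (because $Df\in\Upgamma F$), and since $D$ is a degree-$1$ derivation it satisfies $(KA)(fg)=(KA)(f)\,g+f\,(KA)(g)$; hence $KA$ is a vector field and $K\in\Upgamma(F\otimes TM)$ is a well-defined tensor field. By the computation above, $D$ and $\nabla_K$ then have the \emph{same} action on functions, so $D-\nabla_K$ is a derivation vanishing on functions, i.e.\ a tensorial derivation. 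The argument of Proposition \ref{insertions} applies verbatim to $\Upgamma(\Uplambda F)$ and produces a unique $L\in\Upgamma(\Uplambda^2 F\otimes F^*)$ with $D-\nabla_K=\imath_L$, which is the desired decomposition \eqref{decomp-delta}.

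For uniqueness, suppose $\nabla_K+\imath_L=\nabla_{K'}+\imath_{L'}$. Tensorial derivations vanish on functions, so evaluating on an arbitrary $f\in\mathcal{C}^\infty(M)$ forces $(KA)(f)=(K'A)(f)$ for all $A$, whence $K=K'$; then $\imath_L=\imath_{L'}$, and the uniqueness clause of Proposition \ref{insertions} gives $L=L'$. I expect the only real obstacle to lie in the construction of $\nabla_K$ --- establishing that this connection-built operator is a well-defined graded derivation with the prescribed behavior on functions; once that is secured, the existence and uniqueness steps are formally identical to the classical case. It is worth flagging that, unlike $K$, the form $L$ depends on the chosen connection: replacing $\nabla$ by another connection alters $\nabla_K$ by a tensorial derivation and hence shifts $L$ correspondingly, so the decomposition is canonical only once $\nabla$ has been fixed.
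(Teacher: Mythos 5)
Your proof is correct and follows essentially the same route as the paper's: recover $K$ from the action of $D$ on functions, observe that $D-\nabla_K$ is a tensorial derivation, and invoke the classification of tensorial derivations as insertions to obtain $L$. The only difference is that you spell out the verification that $\nabla_K$ is a graded derivation and the uniqueness step, both of which the paper leaves implicit.
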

\begin{proof}
Let $\alpha^1,\dots, \alpha^{k}\in\Gamma(F^*)$ be smooth sections. The map 
$f\rightarrow (Df)(\alpha^1,\dots, \alpha^{k})$, where $f\in \mathcal{C}^{\infty}(M)$, 
is a derivation so it defines a vector field on $M$, which we denote by 
$K(\alpha^1,\dots,\alpha^{k})$. The map from $\Gamma(F^*)\times\dots\times\Gamma(F^*)$ to 
$\mathfrak{X}(M)$ defined by 
$(\alpha^1,\dots,\alpha^{k})\rightarrow K(\alpha^1,\dots,\alpha^{k})$ is 
$\mathcal{C}^{\infty}(M)-$linear and skew-symmetric, therefore it defines a section 
$K\in\Gamma(\Lambda ^{k}F\otimes TM)$ that satisfies $\nabla_Kf=Df$ for every 
$f\in\mathcal{C}^{\infty}(M)$.\\
The operator $D-\nabla_K$ is a derivation of degree $k$ that acts trivially on 
$\mathcal{C}^{\infty}(M)$; therefore it is a $\mathcal{C}^{\infty}(M)-$linear endomorphism of 
$\Gamma(\Lambda F)$ which is determined by its action on the sections of degree $1$. Then, if 
$s\in \Gamma (F)$, the map $s\rightarrow (D-\nabla_K)s$ defines a morphism from 
$\Gamma(F)$ into $\Gamma(\Lambda^{k+1}F)$ so there is a section 
$L\in\Gamma(\Lambda^{k+1}F\otimes F^{*})$ such that $(D-\nabla_K)s=\imath_L s$. The operator 
$\imath_L$ is a derivation of degree $k$ that acts trivially on $\mathcal{C}^{\infty}(M)$
and as  $D-\nabla_K$ on the sections of $F$. Then, $D=\nabla_K+\imath_L$.
\end{proof}

The decomposition \eqref{eq1} above follows, of course, taking $F=E^*$. From the proof, we get that
$K:\Upgamma E\to \Upgamma TM$ is such that, for any $A\in \Upgamma E$ and $f\in\mathcal{C}^\infty(M)$,
\begin{equation}\label{exprK}
K(A)(f)=Df(A)=qA(f),
\end{equation}
so $K=q$, as before.

The following result will be useful when doing explicit computations.
\begin{lem}\label{lema1}
Let $K:E\to TM$ be a vector bundle morphism, and let $\nabla$ be a connection on $E$. If
$\alpha\in\Upgamma E^*$ and $A,B\in\Upgamma E$, then,
$$
(\nabla_K\alpha)(A,B)=KA(\alpha(B))-\alpha(\nabla_{KA}B)-KB(\alpha(A))+\alpha(\nabla_{KB}A).
$$
\end{lem}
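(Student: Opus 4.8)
The plan is to read the statement off directly from the definition of the operator $\nabla_K$, which plays the role of the Lie derivative $\mathcal{L}_K$ in the connection setting. Recall that, having fixed the connection dual to $\nabla$ on $E^*$ (still denoted $\nabla$) and extended it as a derivation to $\Upgamma(\Uplambda^\bullet E^*)$, the operator $\nabla_K$ associated with $K\in\Upgamma(E^*\otimes TM)$ is the antisymmetrisation of the covariant derivative along $K$; on a homogeneous $\omega\in\Upgamma(\Uplambda^p E^*)$ it is
\begin{equation*}
(\nabla_K\omega)(A_0,\dots,A_p)=\sum_{i=0}^{p}(-1)^i(\nabla_{KA_i}\omega)(A_0,\dots,\widehat{A_i},\dots,A_p).
\end{equation*}
First I would specialise this to $p=1$ and $\omega=\alpha\in\Upgamma E^*$, obtaining the two-term expression
$$
(\nabla_K\alpha)(A,B)=(\nabla_{KA}\alpha)(B)-(\nabla_{KB}\alpha)(A).
$$

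Next I would expand each covariant derivative of $\alpha$ using the compatibility (Leibniz) rule of the dual connection: for any $X\in\Upgamma TM$, $\beta\in\Upgamma E^*$ and $C\in\Upgamma E$ one has $(\nabla_X\beta)(C)=X(\beta(C))-\beta(\nabla_X C)$. Applying this with $X=KA$ and $X=KB$ turns the two summands above into $KA(\alpha(B))-\alpha(\nabla_{KA}B)$ and $KB(\alpha(A))-\alpha(\nabla_{KB}A)$, and substituting and collecting the four resulting terms yields exactly the asserted identity. No further hypotheses on $\nabla$ (in particular, no torsion-freeness) are needed, since the formula is assembled directly from the connection one starts with.

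The computation is routine, so the only point requiring care is the first step: being precise about what $\nabla_K$ denotes and checking that the right-hand side is genuinely tensorial, i.e.\ $\mathcal{C}^\infty(M)$-linear and skew-symmetric in $A,B$, so that it defines an element of $\Upgamma(\Uplambda^2 E^*)$. Skew-symmetry is manifest, and tensoriality holds because replacing $B$ by $fB$ produces two non-tensorial terms, $(KAf)\alpha(B)$ from the Leibniz rule on $\alpha(fB)$ and $-(KAf)\alpha(B)$ from $\nabla_{KA}(fB)=f\nabla_{KA}B+(KAf)B$, which cancel; this is the same mechanism that makes the Koszul formula for $D$ in \eqref{defd} well defined. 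As a consistency check I would verify that, for $E=TM$, $K=q$ and a torsion-free $\nabla$, the identity $\nabla_XY-\nabla_YX=[X,Y]$ collapses the formula to the expression for $(\mathcal{L}_q\alpha)(X,Y)$ computed in Example \ref{tmcase}.
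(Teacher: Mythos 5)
Your proof is correct and follows essentially the same route as the paper: both unwind the definition of $\nabla_K$ to the two-term expression $(\nabla_{KA}\alpha)(B)-(\nabla_{KB}\alpha)(A)$ and then apply the Leibniz rule of the dual connection. The only difference is presentational — the paper carries out the computation in a local frame $\{s_a\}$, $\{\eta^b\}$, $\{\partial_i\}$ with $K=K^j_c\,\eta^c\otimes\partial_j$, while you work directly from the intrinsic antisymmetrised formula for $\nabla_K$; your added checks of tensoriality and of the consistency with Example~\ref{tmcase} are welcome but not needed.
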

\begin{proof}
Consider a basis of sections of $\Upgamma E$, $\{ s_a\}^r_{a=1}$ (with $r=\mathrm{rank}E$),
and let $\{\eta^b\}^r_{b=1}$ be the dual basis. Also, let $\{\partial_i\}^m_{i=1}$ be a local
basis of local vector fields on $M$ (with $\dim M=m$).
Then, we have the local expressions $A=A^as_a$, $B=B^bs_b$, and $K=K^j_c\eta^c\otimes\partial_j$. We can write
$$
\nabla_K =\nabla_{K^j_c\eta^c\otimes\partial_j}=K^j_c\eta^c\wedge\nabla_{\partial_j},
$$
and compute
\begin{align*}
&(\nabla_K \alpha)(A,B)=(K^j_c\eta^c\wedge\nabla_{\partial_j}\alpha)(A,B) \\
&=K^j_c (\eta^c(A)(\nabla_{\partial_j}\alpha)(B)-\eta^c(B)(\nabla_{\partial_j}\alpha)(A))\\
&=K^j_c (A^c(\nabla_{\partial_j}(\alpha (B))-\alpha(\nabla_{\partial_j}B))-B^c(\nabla_{\partial_j}(\alpha (A))-\alpha(\nabla_{\partial_j}A))) \\
&=\nabla_{K^j_cA^c\partial_j}(\alpha(B))-K^j_cA^c\alpha(\nabla_{\partial_j}B)-\nabla_{K^j_cB^c\partial_j}(\alpha(A))+K^j_cB^c\alpha(\nabla_{\partial_j}A).
\end{align*}
Notice that $KA=K^j_cA^c\partial_j$, so we can rearrange the preceding expression, using also the 
$\mathcal{C}^\infty(M)-$linearity of $\alpha$, as
\begin{align*}
(\nabla_K \alpha)(A,B)&=\nabla_{KA}(\alpha(B))-\alpha(\nabla_{KA}B)-\nabla_{KB}(\alpha(A))-\alpha(\nabla_{KB}A) \\
&=KA(\alpha(B))-KB(\alpha(A))+\alpha(\nabla_{KB}A-\nabla_{KA}B).
\end{align*}
\end{proof}
Now, we can find $L$ in the decomposition \eqref{eq1}. Let us start with:
$$
(D-\nabla_q)(\alpha )(A,B)=(\imath_L\alpha )(A,B)=\alpha (L(A,B)).
$$
Now, on the one hand,
$$
(D\alpha)(A,B)=qA(\alpha(B))-qB(\alpha(A))-\alpha(\lie{A}{B}),
$$
and on the other, from lemma \ref{lema1}
$$
\nabla_q\alpha(A,B)=qA(\alpha(B))-\alpha(\nabla_{qA}B)-qB(\alpha(A))+\alpha(\nabla_{qB}A).
$$
Thus,
$$
\alpha(L(A,B))=(D-\nabla_q)(\alpha)(A,B)=\alpha(\nabla_{qA}B-\nabla_{qB}A-\lie{A}{B}),
$$
hence:
\begin{equation}\label{tor}
L(A,B)=\nabla_{qA}B-\nabla_{qB}A-\lie{A}{B}.
\end{equation}
\begin{rem}
Let $(E,q,[\![\, ,\, ]\!])$ be a Lie algebroid over $M$. Let $V\to M$ another vector bundle over $M$.
An $E-$connection on $V$, $\delta$, is an $\mathbb{R}-$bilinear map $\delta:\Upgamma E\times\Upgamma V
\to \Upgamma V$, whose action is denoted $\delta(A,s)=\delta_A s$, such that, for any 
$f\in\mathcal{C}^\infty (M)$,
\begin{align*}
&\delta_{fA}s= f\delta_A s \\
&\delta_A (fs)= (qA)(f)s+f\delta_A s.
\end{align*}
An $E-$connection is simply an $E-$connection on $E$ itself (see \cite{fer02,mac05,blao06} for references 
on Lie algebroid connections and their applications). In this case, it is possible to define
the torsion of $\delta$ as the $E-$valued $2-$form $\mathrm{Tor}\delta\in\Omega^2(E;E)$ given by the
usual expression, but using the Lie algebroid bracket instead of the Lie one:
$$
\mathrm{Tor}\,\delta (A,B):=\delta_A B-\delta_B A-\lie{A}{B}.
$$
Notice that each linear connection $\nabla$ on $E$ determines an $E-$connection $\delta^\nabla$. Simply
define
\begin{equation}\label{deltanabla}
\delta^\nabla_A B :=\nabla_{qA}B.
\end{equation}
The torsion of this connection is then
$$
\mathrm{Tor}\,\delta^\nabla (A,B)=\nabla_{qA}B-\nabla_{qB} A-\lie{A}{B},
$$
which is precisely our expression \eqref{tor}.
\end{rem}

We summarize these calculations in the following result.
\begin{thm}
Let $(E,q,[\![\, ,\, ]\!])$ be a Lie algebroid and $D\in\mathrm{Der}\Upgamma(\Uplambda^\bullet E^*)$ be
the derivation defined in \eqref{defd}. Then, given any linear connection $\nabla$ on $E$, $D$ can be written
as
$$
D=\nabla_q +\imath_{L^\nabla} ,
$$
where $L^\nabla\in\Upgamma(\Uplambda^2 E^*\otimes E)$, the torsion of the $E-$connection $\delta^\nabla$,
is given by
$$
L^\nabla(A,B)=\nabla_{qA}B-\nabla_{qB}A-\lie{A}{B},
$$
for $A,B\in\Upgamma E$.
\end{thm}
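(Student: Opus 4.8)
The plan is to observe that this theorem merely consolidates the computations already carried out in the preceding pages, so the proof reduces to invoking the general decomposition theorem and assembling the intermediate identities in the correct order. I would apply that theorem to the vector bundle $F=E^*$, for which $F^*=E$; since $D\in\mathrm{Der}\,\Upgamma(\Uplambda^\bullet E^*)$ is a derivation of degree $1$, it provides unique tensor fields $K\in\Upgamma(E^*\otimes TM)$ (equivalently, a bundle morphism $K\colon E\to TM$) and $L\in\Upgamma(\Uplambda^2 E^*\otimes E)$ with $D=\nabla_K+\imath_L$.

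Next I would pin down $K$ by evaluating the decomposition on functions. By construction $\nabla_K f=Df$, and testing against a section $A\in\Upgamma E$ gives $K(A)(f)=Df(A)=qA(f)$ by the first line of \eqref{defd}; since $f$ is arbitrary this yields $K=q$, recovering \eqref{exprK}. To obtain $L$, I would examine $D-\nabla_q$ acting on $\alpha\in\Upgamma E^*$. The second line of \eqref{defd} gives $(D\alpha)(A,B)=qA(\alpha(B))-qB(\alpha(A))-\alpha(\lie{A}{B})$, while Lemma \ref{lema1} with $K=q$ gives $(\nabla_q\alpha)(A,B)=qA(\alpha(B))-\alpha(\nabla_{qA}B)-qB(\alpha(A))+\alpha(\nabla_{qB}A)$. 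Subtracting, the derivative terms $qA(\alpha(B))$ and $qB(\alpha(A))$ cancel, and since $(\imath_L\alpha)(A,B)=\alpha(L(A,B))$ we are left with $\alpha(L(A,B))=\alpha(\nabla_{qA}B-\nabla_{qB}A-\lie{A}{B})$. As $\alpha$ ranges over all of $\Upgamma E^*$, this forces $L(A,B)=\nabla_{qA}B-\nabla_{qB}A-\lie{A}{B}$, which is \eqref{tor} and coincides with the torsion $\mathrm{Tor}\,\delta^\nabla$ of the $E$-connection \eqref{deltanabla} recorded in the Remark.

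There is no genuine obstacle here beyond bookkeeping; the only point I would double-check is that the resulting expression for $L$ truly defines a section of $\Upgamma(\Uplambda^2 E^*\otimes E)$, i.e.\ that it is skew-symmetric and $\mathcal{C}^\infty(M)$-bilinear. Skew-symmetry is immediate. For tensoriality I would verify $L(fA,B)=fL(A,B)$ using the Leibniz rule of the bracket, $\lie{fA}{B}=f\lie{A}{B}-qB(f)A$, together with the connection Leibniz rule $\nabla_{qB}(fA)=qB(f)A+f\nabla_{qB}A$; the two occurrences of $qB(f)A$ cancel, leaving $fL(A,B)$. This consistency is in fact guaranteed a priori by the uniqueness clause of the decomposition theorem, so the verification serves only as a sanity check.
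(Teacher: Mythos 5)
Your proposal is correct and follows essentially the same route as the paper, which presents this theorem as a summary of the preceding computations: apply the decomposition theorem with $F=E^*$, identify $K=q$ via \eqref{exprK}, and extract $L$ by subtracting $\nabla_q\alpha$ (computed via Lemma \ref{lema1}) from $D\alpha$ to obtain \eqref{tor}. Your added tensoriality check on $L$ is a harmless extra that, as you note, is already guaranteed by the uniqueness in the decomposition theorem.
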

An equivalent formulation of this result can be obtained in the particular case of a Lie algebroid
on the tangent bundle, which offers an alternative interpretation of the contracted bracket 
$[X,Y]_q$ in terms of the torsion of the $TM-$connection $\delta^\nabla$ in \eqref{deltanabla}.

Observe that if $E=TM$, then, given any symmetric linear connection $\nabla$ on $TM$,
the torsion of the corresponding $TM-$connection  $\delta^\nabla$
is given by
\begin{align*}
\mathrm{Tor}\, \delta^\nabla =& L^\nabla(X,Y) \\
=& \nabla_{qX}Y-\nabla_{qY}X-\lie{X}{Y} \\
=& [X,Y]_q+(\nabla_Y q)(X)-(\nabla_X q)(Y)-\lie{X}{Y},
\end{align*}
for any $X,Y\in\mathcal{X}(M)$.

Just apply that $\nabla$ is torsionless, and the properties of any covariant derivative.

\section{From cohomology operators to Lie algebroids}\label{sec3}
Suppose we have a smooth vector bundle $\pi:E\to M$, and a derivation 
$D\in\mathrm{Der}\Upgamma(\Uplambda E^*)$ such that
$D^2=0$. Then, we can define a mapping $q:\Upgamma E\to \Upgamma TM$ as follows,
$$
q(A)(f):=Df(A),
$$
for any $f\in\mathcal{C}^\infty (M)$, $A\in\Upgamma E$.\\
Let us define also a bracket on the sections of $E$ in the following way: if $A,B\in\Upgamma E$, then their bracket
$\lie{A}{B}$ is the section of $E$ characterized by
$$
\alpha (\lie{A}{B})=D(\alpha(B))(A)-D(\alpha(A))(B)-D\alpha(A,B),
$$
for any $\alpha\in\Upgamma E^*$. The following result is well-known.
\begin{pro}\label{pro2}
The triple $(E,q,[\![\, ,\, ]\!])$ is a Lie algebroid.
\end{pro}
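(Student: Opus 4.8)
The plan is to verify, in turn, the three structural ingredients of a Lie algebroid: that $\lie{\,}{\,}$ is well defined, $\mathbb{R}$-bilinear and skew-symmetric; that $q$ is a vector bundle morphism for which the Leibniz rule holds; and finally that $\lie{\,}{\,}$ satisfies the Jacobi identity. The first two groups of properties are direct consequences of $D$ being a degree-$1$ derivation with $Df(A)=qA(f)$; the Jacobi identity is the only place where the hypothesis $D^2=0$ enters, and it is where the real work lies.

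First I would check that the prescription for $\lie{A}{B}$ actually defines a section of $E$. Since $E^{**}\cong E$, it suffices to show that the right-hand side $D(\alpha(B))(A)-D(\alpha(A))(B)-D\alpha(A,B)$ is $\mathcal{C}^\infty(M)$-linear in $\alpha$. Replacing $\alpha$ by $f\alpha$ and using the graded Leibniz rule $D(f\alpha)=Df\wedge\alpha+fD\alpha$ together with $Dg(A)=qA(g)$ for functions $g$, the first-order terms $qA(f)\alpha(B)$ and $qB(f)\alpha(A)$ produced by the first two summands cancel exactly against the $Df\wedge\alpha$ contribution to $-D(f\alpha)(A,B)$, leaving $f$ times the original expression. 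Skew-symmetry of $\lie{\,}{\,}$ then follows at once from the skew-symmetry of the $2$-form $D\alpha$, and $\mathbb{R}$-bilinearity is clear. That $q$ is a bundle map is immediate from $q(fA)(g)=Dg(fA)=fDg(A)$, and $q(A)$ is a genuine vector field because $D$ is a derivation on functions. The Leibniz rule $\lie{A}{fB}=f\lie{A}{B}+qA(f)B$ is obtained by the same computation as the well-definedness check, now varying the second slot.

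Next I would record that, after substituting $Dg(A)=qA(g)$, the defining relation for $\lie{A}{B}$ is nothing but \eqref{defd} read for the pair $(q,\lie{\,}{\,})$ just constructed; that is, $D$ acts on functions and $1$-forms exactly as the De Rham differential of the prospective algebroid. Since $D$ is a degree-$1$ derivation, it is determined by its action on the generators $\mathcal{C}^\infty(M)$ and $\Upgamma E^*$, and hence satisfies on every $\Upgamma(\Uplambda^p E^*)$ the usual Koszul formula expressing $D\omega$ through $q$ and $\lie{\,}{\,}$; this is a purely formal consequence of the graded Leibniz rule and does not invoke $D^2=0$.

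Finally, for the Jacobi identity I would extract two facts from $D^2=0$. Applying $D^2$ to a function and using the Koszul formula for $D$ on the $1$-form $Df$ gives $0=D^2f(A,B)=[qA,qB](f)-q\lie{A}{B}(f)$, so $q\lie{A}{B}=[qA,qB]$, i.e. $q$ is a bracket homomorphism. Then I would apply $D^2$ to an arbitrary $1$-form $\alpha$ and expand $0=D^2\alpha(A,B,C)$ using the Koszul formula for $D$ on the $2$-form $D\alpha$. The terms $qA(\alpha\lie{B}{C})$ and their cyclic companions cancel outright; the second-order terms regroup into commutators $[qA,qB](\alpha C)$ which, via the homomorphism property, are annihilated by the terms $-q\lie{A}{B}(\alpha C)$; and what survives is exactly
\[
D^2\alpha(A,B,C)=\alpha\big(\lie{\lie{A}{B}}{C}+\lie{\lie{B}{C}}{A}+\lie{\lie{C}{A}}{B}\big).
\]
Since this vanishes for every $\alpha\in\Upgamma E^*$ and $E^*$ separates the sections of $E$, the Jacobiator is zero, which is the Jacobi identity. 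The main obstacle is precisely this last expansion: the bookkeeping needed to see the cancellation of the second-order terms and the regrouping of the remainder into the Jacobiator, all the other steps being routine applications of the derivation property.
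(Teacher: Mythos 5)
Your proof is correct and complete: the well-definedness, skew-symmetry and Leibniz checks are exactly the routine derivation-property arguments, and the extraction of the anchor-homomorphism property from $D^2f=0$ followed by the identification $D^2\alpha(A,B,C)=\alpha(\circlearrowright\lie{\lie{A}{B}}{C})$ is the standard (and right) way to get the Jacobi identity. For comparison: the paper offers no proof at all of Proposition \ref{pro2} --- it declares the result well-known and defers to the references \cite{kosmag90} and \cite{cramoe04} --- so your write-up supplies precisely the argument the paper omits, and in the same spirit as the constructions of Section \ref{sec2}.
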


\begin{rem} It follows from
proposition \ref{AC} and proposition \ref{pro2}, that there exists a one to one
correspondence between Lie algebroids on E and cohomology  operators in
$\Upgamma(\Uplambda E^*)$, that is, derivation $D$ of degree one on
$\Upgamma(\Uplambda E^*)$ such that $D^2=0$. For various formulations of this
results see, for example, \cite{kosmag90} and \cite{cramoe04}.
\end{rem}

Let us delve into the structure of this Lie algebroid when $E=TM$. In this case, last proposition can be reformulated: Let $D$ be a derivation, $D\in\mathrm{Der}\Upgamma(\Uplambda^\bullet T^*M)$, such that $D^2=0$, then it has a Fr\"olicher-Nijenhuis decomposition $D=\mathcal{L}_K+\imath_L$, where $K\in\Omega^1(M;TM)$ and $L\in\Omega^2(M;TM)$. Then, the triple $(E,q,[\![\, ,\, ]\!])$ is a Lie algebroid with 
$$q=K$$
and
\begin{equation}\label{eq3}
\lie{X}{Y}=[X,Y]_K-L(X,Y),
\end{equation}
where the contracted bracket is given in \eqref{eq2}.\\
Formula \eqref{eq3} can be deduced by observing that $D$ is precisely the operator associated to the Lie algebroid by the 
construction in the previous section. In particular, that tells us that the distribution $\mathrm{Im}K$ is 
involutive, because (as the anchor map of a Lie algebroid is a Lie algebra morphism) 
$[KX,KY]=K\lie{X}{Y}\in\mathrm{Im}K$. Notice (see \cite{Gra0601}) that when the Nijenhuis
torsion of $K$ vanishes, then $[\, ,\, ]_K$ is a Lie bracket.

Taking the action of $K$ on both sides of \eqref{eq3}, we get
$$
KL(X,Y)=K[KX,Y]+K[X,KY]-K^2[X,Y]-K\lie{X}{Y}.
$$
But, as $K=q$ is a Lie algebroid anchor, it is a Lie algebra morphism, so
$$
[KX,KY]-K[KX,Y]-K[X,KY]+K^2[X,Y]=-KL(X,Y),
$$
or, more succintly, in terms of the Nijenhuis torsion of $K$,
\begin{equation}\label{eqKL}
T_K(X,Y)=-KL(X,Y).
\end{equation}
This expression allows us to prove the following result:
Let $D\in\mathrm{Der}^1\Omega(M)$ be such that $D^2=0$ and
its Fr\"olicher-Nijenhuis decomposition has the form $D=\mathcal{L}_K$ (i.e, $L=0$).
\begin{pro}\label{TN}
The derivation $D=\mathcal{L}_K$ induces a Lie algebroid structure on (sections of) $TM$ with
anchor map $q=K$ and bracket
$$
[X,Y]_K=[KX,Y]+[X,KY]-K[X,Y],
$$
\emph{if and only if} $K$ is integrable (i.e, $T_K=0$).
\end{pro}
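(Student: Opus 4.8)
The plan is to reduce the whole biconditional to the single identity $D^2=\mathcal{L}_{T_K}$. By Proposition \ref{pro2} and the correspondence recorded in the remark following it, the operator $D=\mathcal{L}_K$ equips $\Upgamma TM$ with a Lie algebroid structure precisely when $D^2=0$; and since the Fr\"olicher-Nijenhuis decomposition of $D$ here has $L=0$, formula \eqref{eq3} shows that the induced bracket is exactly $\lie{X}{Y}=[X,Y]_K$ (with the contracted bracket of \eqref{eq2}) and that the anchor is $q=K$, matching the statement verbatim. So it suffices to prove that $D^2=0$ if and only if $T_K=0$.

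First I would compute $D^2$ in terms of the Fr\"olicher-Nijenhuis bracket. Since $K\in\Omega^1(M;TM)$, the derivation $\mathcal{L}_K$ has odd degree $1$, so the graded commutator satisfies $[\mathcal{L}_K,\mathcal{L}_K]=\mathcal{L}_K\circ\mathcal{L}_K-(-1)^{1\cdot 1}\mathcal{L}_K\circ\mathcal{L}_K=2\,\mathcal{L}_K\circ\mathcal{L}_K=2D^2$. On the other hand, the very definition of the Fr\"olicher-Nijenhuis bracket gives $[\mathcal{L}_K,\mathcal{L}_K]=\mathcal{L}_{[K,K]_{FN}}$. Using that $K\mapsto\mathcal{L}_K$ is $\mathbb{R}$-linear together with the identity $T_K=\tfrac12[K,K]_{FN}$, these two expressions combine to $D^2=\tfrac12\mathcal{L}_{[K,K]_{FN}}=\mathcal{L}_{T_K}$.

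It then remains to note that $\mathcal{L}_N=0$ forces $N=0$ for any $N\in\Omega^2(M;TM)$: this injectivity of the Lie-derivative assignment follows from the uniqueness part of Theorem \ref{teofn}, since the zero derivation admits only the decomposition $(K,L)=(0,0)$, whence $\mathcal{L}_{T_K}=\mathcal{L}_{T_K}+\imath_0=0$ yields $T_K=0$ (equivalently, one evaluates $\mathcal{L}_{T_K}$ on functions and vector-field arguments and reads off that it vanishes only when $T_K$ does). Combined with $D^2=\mathcal{L}_{T_K}$, this gives $D^2=0$ iff $T_K=0$, which together with the first paragraph proves the proposition. As a consistency check, the forward implication is also visible directly from \eqref{eqKL}, where $L=0$ forces $T_K=-K\cdot 0=0$, although that relation presupposes the Lie algebroid axioms and so only delivers one direction. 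I expect the only delicate point to be the identity $D^2=\mathcal{L}_{T_K}$ itself, namely tracking the degree-dependent sign in $[\mathcal{L}_K,\mathcal{L}_K]=2D^2$ and correctly invoking the definition of the Fr\"olicher-Nijenhuis bracket; once that identity is in hand, the injectivity of $N\mapsto\mathcal{L}_N$ makes the equivalence immediate.
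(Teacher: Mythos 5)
Your proof is correct and follows essentially the same route as the paper: the sufficiency direction rests on $D^2=\tfrac12[\mathcal{L}_K,\mathcal{L}_K]=\tfrac12\mathcal{L}_{[K,K]_{FN}}=\mathcal{L}_{T_K}$ together with Proposition \ref{pro2}, which is exactly the printed argument. The only (harmless) divergence is in the necessity direction, where the paper reads $T_K=0$ directly off equation \eqref{eqKL} with $L=0$, while you instead deduce it from $D^2=\mathcal{L}_{T_K}=0$ via the injectivity of $N\mapsto\mathcal{L}_N$ guaranteed by the uniqueness clause of Theorem \ref{teofn}; both are valid, and you correctly note that the \eqref{eqKL} route presupposes the algebroid axioms.
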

\begin{proof}
The condition $T_K=0$ follows from \eqref{eqKL} above. On the other hand, the
sufficiency is guaranteed by the identities $D^2=\frac{1}{2}[D,D]$ and
$[\mathcal{L}_K,\mathcal{L}_L]=\mathcal{L}_{[K,L]_{FN}}$ and by proposition \ref{pro2}.
\end{proof}
\begin{rem}
The `if' part of this proposition was given as theorem 3.7 in \cite{gra1301} and as exercise 40 in \cite{crafer11}.
\end{rem}
Recall that, given a bundle map
$N:TM\rightarrow TM$, when $T_N=0$, $N$ is called a Nijenhuis tensor. As
every Nijenhuis tensor induce a Lie algebroid, then, any complex structure ($K^2=-I$), tangent structure ($K^2=0$) or product structure ($K^2=I$), defines a Lie 
algebroid on $TM$.
Let us see what happens when a term $L\neq 0$ is non trivial. Still demanding that
$D=\mathcal{L}_K+\imath_L$ be of square zero; we will do this in two steps, 
assuming in the first one that $K$ is invertible.
If $K:\Upgamma TM\rightarrow \Upgamma TM$ is an invertible endomorphism, the vector-valued $2-$form
$L\in\Omega^2 (M;TM)$ is determined by 
\begin{equation}
L(X,Y):=-K^{-1} T_K(X,Y),
\end{equation}
for any $X, Y\in\Upgamma TM$, and then, 
the bracket $[\![\;,\;]\!]:\Upgamma TM\rightarrow \Upgamma TM$ is 
\begin{equation}\label{corchete}
[\![X,Y]\!]:=[X,Y]_K+K^{-1} T_K(X,Y),
\end{equation}
for all $X, Y\in\Upgamma TM$.
\begin{lem}\label{B}
The bracket \eqref{corchete} satisfies
$$
[\![X,Y]\!]=K^{-1}[KX,KY].
$$
\end{lem}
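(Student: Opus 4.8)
The plan is to establish the identity by direct substitution, exploiting the fact that both the contracted bracket $[X,Y]_K$ and the Nijenhuis torsion $T_K$ are given by explicit expressions in terms of the ordinary Lie bracket on $\Upgamma TM$ and the endomorphism $K$. First I would expand $K^{-1}T_K(X,Y)$ using the explicit formula \eqref{torsion},
$$
T_K(X,Y)=[KX,KY]-K[KX,Y]-K[X,KY]+K^2[X,Y].
$$
Since $K$ is invertible by hypothesis and $K^{-1}$ is a bundle endomorphism (hence $\mathcal{C}^\infty(M)$-linear on $\Upgamma TM$), it distributes over this sum, and the factors of $K$ sitting to the left of each Lie bracket are absorbed, yielding
$$
K^{-1}T_K(X,Y)=K^{-1}[KX,KY]-[KX,Y]-[X,KY]+K[X,Y].
$$

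Next I would add to this the contracted bracket $[X,Y]_K=[KX,Y]+[X,KY]-K[X,Y]$, as prescribed by the definition \eqref{corchete} of $\lie{X}{Y}$. The crucial observation is that the three summands $[KX,Y]$, $[X,KY]$ and $-K[X,Y]$ of $[X,Y]_K$ cancel exactly against the three summands $-[KX,Y]$, $-[X,KY]$ and $+K[X,Y]$ produced above by $K^{-1}T_K$. The only term that survives this cancellation is $K^{-1}[KX,KY]$, which is precisely the asserted identity.

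The computation is entirely routine and presents no genuine obstacle; the single point requiring attention is the correct bookkeeping of $K^{-1}$ against the factors of $K$ inside $T_K$, which is exactly what drives the cancellation. If one prefers to avoid inverting $K$ explicitly, the identity may be recast by applying $K$ to both sides: it is then equivalent to $K\lie{X}{Y}=[KX,KY]$, i.e. to $K[X,Y]_K+T_K(X,Y)=[KX,KY]$, and this last equality is nothing more than a direct rearrangement of the definition \eqref{torsion} of the Nijenhuis torsion. In this form the verification is immediate and manifestly symmetric.
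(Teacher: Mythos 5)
Your proof is correct and follows essentially the same route as the paper: substitute the explicit formula for $T_K$, distribute $K^{-1}$, and observe that the three terms of $[X,Y]_K$ cancel against the corresponding terms of $K^{-1}T_K(X,Y)$, leaving $K^{-1}[KX,KY]$. The closing remark about applying $K$ to both sides is a pleasant reformulation but does not change the substance of the argument.
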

\begin{proof}
It is a straightforward computation:
\begin{align*}
[\![X,Y]\!]=&[X,Y]_K+K^{-1} T_K(X,Y)\\
=& [KX,Y]+[X,KY]-K[X,Y]\\
&+K^{-1}([KX,KY]-K[KX,Y]-K[X,KY]+K^2[X,Y])\\
=&  K^{-1}[KX,KY]
\end{align*}
\end{proof}
\begin{pro}\label{C}
The triple $(TM,K,[\![\, ,\,]\!])$, with the bracket defined in \eqref{corchete}, 
is a Lie algebroid.
\end{pro}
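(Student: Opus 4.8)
The plan is to exploit Lemma \ref{B}, which already reduces the bracket to the simple form $\lie{X}{Y}=K^{-1}[KX,KY]$, equivalently $K\lie{X}{Y}=[KX,KY]$. This identity says that the invertible bundle endomorphism $K$ intertwines the new bracket with the ordinary Lie bracket of vector fields, so that $K$ is a Lie-algebra isomorphism between $(\Upgamma TM,\lie{\,}{\,})$ and $(\Upgamma TM,[\,,\,])$. Every axiom will then be inherited from the corresponding property of $[\,,\,]$ by transport of structure; the only points requiring genuine care are the Leibniz rule and the identification of the anchor. As a preliminary, I would record that, being a bundle endomorphism, $K$ is $\mathcal{C}^\infty(M)-$linear, and since $K$ is invertible its inverse $K^{-1}$ is again a smooth bundle map, hence also $\mathcal{C}^\infty(M)-$linear. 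From this, $\mathbb{R}-$bilinearity and skew-symmetry of $\lie{\,}{\,}$ are immediate, as they hold for $[\,,\,]$ and both $K$, $K^{-1}$ are linear.

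For the Jacobi identity I would apply $K$ to the Jacobiator. Using $K\lie{X}{Y}=[KX,KY]$ twice gives
$$
K\,\lie{X}{\lie{Y}{Z}}=[KX,[KY,KZ]],
$$
and summing over cyclic permutations of $X,Y,Z$ the right-hand side vanishes by the Jacobi identity for the ordinary bracket $[\,,\,]$. Since $K$ is injective, the cyclic sum of $\lie{X}{\lie{Y}{Z}}$ must itself vanish, which is precisely the Jacobi identity for $\lie{\,}{\,}$.

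Finally, for the Leibniz rule I would compute directly, using the tensoriality of $K$ and $K^{-1}$ together with the ordinary Leibniz rule $[KX,fKY]=f[KX,KY]+(KX)(f)\,KY$:
$$
\lie{X}{fY}=K^{-1}[KX,K(fY)]=K^{-1}[KX,fKY]=f\,K^{-1}[KX,KY]+(KX)(f)\,K^{-1}(KY),
$$
which equals $f\lie{X}{Y}+(KX)(f)\,Y$. Comparing with the required identity $\lie{A}{fB}=f\lie{A}{B}+qA(f)B$ identifies the anchor as $q=K$, in agreement with Proposition \ref{pro2}, where $q=K$ was already obtained from $qX(f)=Df(X)=KX(f)$. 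There is no serious obstacle once Lemma \ref{B} is available: the whole statement follows by transporting the Lie-algebroid structure along the isomorphism $K$. The only mild subtlety is bookkeeping of the $\mathcal{C}^\infty(M)-$linearity, so that the single derivation term surviving in the Leibniz computation is exactly $(KX)(f)\,Y$, yielding the anchor $q=K$ rather than any twisted variant.
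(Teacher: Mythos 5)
Your proof is correct and takes essentially the same route as the paper: both arguments rest on Lemma \ref{B}, obtaining the Jacobi identity by transporting it along the injective bundle map $K$ from the ordinary Lie bracket. The only (cosmetic) difference is in the Leibniz rule, which you derive from the transported form $\lie{X}{fY}=K^{-1}[KX,fKY]$, while the paper expands the original definition $[X,fY]_K+K^{-1}T_K(X,fY)$ and invokes the tensoriality of $T_K$; both computations give $f\lie{X}{Y}+KX(f)Y$ and hence the anchor $q=K$.
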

\begin{proof}
That $(\Upgamma TM,[\![\;,\;]\!])$ is a Lie algebra is immediate (Jacobi's identity
results by applying Lemma \ref{B}), and $K$ is a morphism of 
$\mathcal{C}^{\infty}(M)-$modules, so we only need to check the Leibniz rule:
$$
\begin{array}{lll}
[\![X, fY]\!]&=&[X,fY]_K+K^{-1}T_K(fX,Y)\\
&=&[KX,fY]+[X,fKY]-K[X,fY]+fK^{-1}T_K(X,Y)\\
&=& f[X,Y]_K+KX(f)Y+X(f)KY-X(f)KY+fK^{-1} T_K(X,Y)\\
&=& f[\![X, Y]\!]+KX(f)Y.
\end{array}
$$
\end{proof}
Every Lie algebroid induced by an invertible endomorphism is isomorphic to the trivial Lie algebroid:
\begin{pro}\label{trivial}
The Lie algebroids $(TM,K,[\![\, ,\,]\!])$ (given by proposition \ref{C}) and the trivial one
$(TM,Id,[\, ,\,])$ are isomorphic.
\end{pro}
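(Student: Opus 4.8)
The plan is to produce the isomorphism by hand, and the natural candidate is $K$ itself, regarded as a vector bundle map $\Phi := K : TM \to TM$ over the identity of $M$. Since by hypothesis $K$ is an invertible endomorphism, $\Phi$ is automatically a vector bundle isomorphism (fibrewise linear and bijective), so the only substantive thing left is to verify that $\Phi$ intertwines the two Lie algebroid structures. Once that is done, the inverse $K^{-1}$ is also a bundle map covering $\mathrm{id}_M$ and automatically respects both structures, so $\Phi$ will be a genuine isomorphism of Lie algebroids.

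First I would recall what a morphism of Lie algebroids over a common base $M$ amounts to: a vector bundle map $\Phi$ covering $\mathrm{id}_M$ such that (i) the anchors are compatible, $q'\circ\Phi = q$, where here $q=K$ is the anchor of the source $(TM,K,\lie{\,}{\,})$ and $q'=\mathrm{Id}$ is the anchor of the trivial target $(TM,\mathrm{Id},[\,,\,])$; and (ii) $\Phi$ is a bracket homomorphism, $\Phi(\lie{X}{Y}) = [\Phi X,\Phi Y]$ for all $X,Y\in\Upgamma TM$, where the right-hand bracket is the ordinary Lie bracket of the trivial algebroid.

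Condition (i) is immediate, since $q'(\Phi X) = \mathrm{Id}(KX) = KX = q(X)$ for every $X\in\Upgamma TM$. For condition (ii) I would simply invoke Lemma \ref{B}, which expresses the bracket as the $K$-conjugate of the ordinary one, $\lie{X}{Y} = K^{-1}[KX,KY]$. Applying $\Phi=K$ to both sides and cancelling $K K^{-1}$ yields $\Phi(\lie{X}{Y}) = [KX,KY] = [\Phi X,\Phi Y]$, which is exactly the required homomorphism property. Thus $\Phi=K$ is an anchor- and bracket-preserving bundle isomorphism, hence an isomorphism of Lie algebroids.

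Honestly there is no real obstacle here: all the computational content was already extracted in Lemma \ref{B}, which identified the new bracket as the pullback of the standard Lie bracket under the invertible map $K$. The only point requiring a little care is stating the definition of a Lie algebroid isomorphism over a fixed base correctly (a bundle map covering the identity, with matching anchors and matching brackets), after which both defining conditions fall out of Lemma \ref{B} with essentially no further computation.
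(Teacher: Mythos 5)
Your proof is correct and is essentially the paper's own argument: the paper exhibits the isomorphism as $\phi=K^{-1}$ going from the trivial algebroid to $(TM,K,[\![\,,\,]\!])$, while you use $K$ in the opposite direction, and both verifications reduce immediately to Lemma \ref{B}. The two are inverse to one another, so there is no substantive difference.
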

\begin{proof}
The desired isomorphism is $\phi=K^{-1}:\Upgamma TM\rightarrow \Upgamma TM$, since
$$
K\circ \phi=K\circ K^{-1}=Id\,,
$$
and
$$
\phi([X,Y])=K^{-1}([X,Y])=K^{-1}([K\circ K^{-1} (X),K\circ K^{-1}(Y)])=[\![\phi(X) ,\phi(Y)]\!]
$$
by lemma \ref{B}.
\end{proof}

To summarize, we have the following result which, in particular, applies to almost-complex or almost-product 
structures.
\begin{thm}\label{thm2}
Let $K:TM\to TM$ be an invertible bundle endomorphism. Then, a derivation of the form
$$
D=\mathcal{L}_K+\imath_L\in\mathrm{Der}^1\Omega(M),
$$ 
(with $L\in\Omega^2(M;TM)$) has vanishing square if and only if
$$
L=-K^{-1}T_K .
$$
In this case, the Lie algebroid structure on $TM$ determined by $D$ is isomorphic to the
trivial one.
\end{thm}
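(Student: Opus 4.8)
The plan is to split the statement into its two assertions: the equivalence $D^2=0\Leftrightarrow L=-K^{-1}T_K$, and the claim that the resulting algebroid is trivial. The second assertion costs nothing once the first is settled, since it is exactly Proposition~\ref{trivial}. The real content is therefore the equivalence, and I would prove it by exploiting the correspondence between cohomology operators and Lie algebroids rather than by a direct expansion of $\tfrac12[D,D]$.

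For the \emph{only if} direction I would assume $D^2=0$ and invoke Proposition~\ref{pro2}: the derivation $D$ then produces a Lie algebroid $(TM,q,\lie{\,}{\,})$ whose anchor is $q=K$ by \eqref{exprK} and whose bracket is given by \eqref{eq3}. Since the anchor of a Lie algebroid is a morphism of brackets, $K\lie{X}{Y}=[KX,KY]$; substituting \eqref{eq3} and collecting terms is precisely the computation leading to \eqref{eqKL}, namely $T_K(X,Y)=-KL(X,Y)$. As $K$ is invertible, applying $K^{-1}$ yields $L=-K^{-1}T_K$.

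For the \emph{if} direction I would reverse the logic. Assuming $L=-K^{-1}T_K$, I define the bracket \eqref{corchete}; Lemma~\ref{B} and Proposition~\ref{C} guarantee that $(TM,K,\lie{\,}{\,})$ is a genuine Lie algebroid. Its De Rham differential $D'$ is then a cohomology operator, so $(D')^2=0$ by Proposition~\ref{AC}. It remains to recognize $D'$ as $D$ itself. Computing the Fr\"olicher--Nijenhuis decomposition of $D'$ exactly as in Example~\ref{tmcase} gives $K'=q=K$ together with $L'(X,Y)=[X,Y]_K-\lie{X}{Y}=-K^{-1}T_K(X,Y)=L(X,Y)$, where the middle equality uses \eqref{corchete} and the last uses the hypothesis. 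By the uniqueness in Theorem~\ref{teofn} we conclude $D'=\mathcal{L}_{K}+\imath_{L}=D$, whence $D^2=(D')^2=0$.

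Finally, once $D^2=0$ we are in the situation of Propositions~\ref{C} and~\ref{trivial}, and the map $\phi=K^{-1}$ exhibits the isomorphism with $(TM,\mathrm{Id},[\,,\,])$, completing the argument. The step I expect to be most delicate is the identification $D'=D$ in the \emph{if} direction: one must be sure that the $L'$ read off from the algebroid differential coincides with the prescribed $L$, which rests on the uniqueness of the Fr\"olicher--Nijenhuis decomposition. The alternative of checking $D^2=0$ head-on through $\tfrac12[\mathcal{L}_K+\imath_L,\mathcal{L}_K+\imath_L]$ --- expanding into $\mathcal{L}_{T_K}$, $\imath_{[L,L]_{RN}}$ and the mixed term $[\mathcal{L}_K,\imath_L]$ --- is possible but requires the full algebra of graded brackets and is considerably more laborious, so I would avoid it.
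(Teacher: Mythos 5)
Your proposal is correct and follows essentially the same route as the paper, which assembles Theorem \ref{thm2} from equation \eqref{eqKL} (the \emph{only if} direction, via the anchor being a morphism of brackets) together with Lemma \ref{B}, Proposition \ref{C} and Proposition \ref{trivial} (the \emph{if} direction and the triviality claim). Your explicit appeal to the uniqueness in Theorem \ref{teofn} to identify the algebroid differential $D'$ with $D$ merely makes precise a step the paper leaves implicit.
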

In the next step, we consider the general situation of a derivation of $\mathbb{Z}-$degree $1$, 
$D=\mathcal{L}_K+i_L$, where $K$ is not necessarily invertible.
We ask ourselves under which conditions on $K$ and $L$, it defines a Lie algebroid structure.
From the Fr\"olicher-Nijenhuis decomposition $D=\mathcal{L}_K+i_L$ and the general properties of the
Fr\"olicher-Nijenhuis and Richardson-Nijenhuis brackets (see \cite{ksm94,mic87,niri67}), we get:
\begin{align*}
D^2&=\frac{1}{2}[D,D]=\frac{1}{2}[\mathcal{L}_K+\imath_L,\mathcal{L}_K+\imath_L]\\
&=\frac{1}{2}\mathcal{L}_{[K,K]_{FN}}+\imath_{[K,L]_{FN}}+\mathcal{L}_{\imath_LK}+
\frac{1}{2}\imath_{[L,L]_{RN}},
\end{align*}
so $D^2=0$ is equivalent to the conditions
\begin{equation}\label{eqBrackets}
\begin{dcases}
\frac{1}{2}[K,K]_{FN}+\imath_LK=0\\
[K,L]_{FN}+\frac{1}{2}[L,L]_{RN}=0.
\end{dcases}
\end{equation}
The first of these is already known, it comes from the imposition that the Lie algebroid bracket be
of the form $\lie{X}{Y}=[X,Y]_K-L(X,Y)$ and is nothing more than equation \eqref{eqKL}.
A straightforward computation shows that, in fact, it is also equivalent
to $D^2f=0$, for any $f\in\mathcal{C}^\infty(M)$. The second condition is much more difficult to satisfy.
In the following sections we will construct some solutions out of structures on the tangent bundle
of a manifold $M$ with geometric relevance.

\section{Lie algebroids associated to idempotent endomorphisms}\label{sec4}
Let $TM\to M$ be the tangent bundle over the connected manifold $M$, 
and let $N:TM\to TM$ be an idempotent 
endomorphism (that is, such that $N^2=N$). Then, $N$ has locally constant rank
and $\ker N$, $\mathrm{Im}\,N$ are
vector (sub)bundles such that $TM=\ker N\oplus\mathrm{Im}\,N$ (see \cite{hus08}). We will assume that $\mathrm{Im}\,N$ is an involutive distribution and write down proofs in the
real case for simplicity, but the results are valid in the complex case as well.
\begin{lem}\label{ct}
Under the above assumptions, the following holds: 
\begin{equation}
N[NX,NY]=[NX,NY].
\end{equation}
\end{lem}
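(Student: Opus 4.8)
The plan is to exploit two facts that combine almost immediately: that an idempotent endomorphism acts as the identity on its own image, and the standing involutivity hypothesis on $\mathrm{Im}\,N$.

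First I would record the elementary consequence of idempotency. For any $v\in\mathrm{Im}\,N$ there is some $w$ with $v=Nw$, whence $Nv=N^2w=Nw=v$. Thus $N$ restricts to the identity on its image, i.e. $N$ is precisely the projector of $TM$ onto $\mathrm{Im}\,N$ along $\ker N$ determined by the splitting $TM=\ker N\oplus\mathrm{Im}\,N$. This is the only place idempotency is used.

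Next, for arbitrary $X,Y\in\mathcal{X}(M)$, the vector fields $NX$ and $NY$ are by construction sections of the distribution $\mathrm{Im}\,N$, that is $NX,NY\in\Upgamma(\mathrm{Im}\,N)$. Since we are assuming $\mathrm{Im}\,N$ to be involutive, the Lie bracket $[NX,NY]$ is again a section of $\mathrm{Im}\,N$. Applying the first observation to the particular vector field $[NX,NY]\in\Upgamma(\mathrm{Im}\,N)$ then yields $N[NX,NY]=[NX,NY]$, which is the assertion.

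I do not expect a genuine obstacle here: the statement is essentially a restatement of the definition of a projector together with involutivity. The one hypothesis that must be flagged as essential is the involutivity of $\mathrm{Im}\,N$; without it, $[NX,NY]$ need not lie in $\mathrm{Im}\,N$, so $N$ would not fix it and the identity would fail in general (this is exactly the obstruction measured by the Nijenhuis torsion of $N$ that the later sections analyze). The idempotency, by contrast, enters only to guarantee that $N$ acts as the identity on vectors already lying in its image.
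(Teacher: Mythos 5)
Your proof is correct and is essentially the paper's own argument: involutivity places $[NX,NY]$ in $\mathrm{Im}\,N$, i.e.\ $[NX,NY]=NZ$ for some $Z$, and then idempotency gives $N[NX,NY]=N^2Z=NZ=[NX,NY]$. Your phrasing in terms of $N$ restricting to the identity on its image is just a mild repackaging of the same two steps.
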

\begin{proof}
As $\mathrm{Im}N$ is involutive, there exists a $Z\in \Upgamma TM$ such that $[NX,NY]=NZ$, and by applying $N$
to both sides of this equation,
\begin{equation*}
N[NX,NY]=N^2Z=NZ=[NX,NY]
\end{equation*}
\end{proof}
Consider now the Nijenhuis torsion of $N$ \eqref{torsion}.
In our case, it can be rewritten as
\begin{equation*}
T_N(X,Y)=[NX,NY]-N[NX,Y]-N[X,NY]+N[X,Y].
\end{equation*}
By applying $N$ to both sides, we get
\begin{equation*}
N T_N(X,Y)=N[NX,NY]-N^2[NX,Y]-N^2[X,NY]+N^2[X,Y],
\end{equation*}
and, using lemma \ref{ct} along with the property $N^2=N$,
\begin{equation}\label{NTn}
N T_N(X,Y)=[NX,NY]-N[NX,Y]-N[X,NY]+N[X,Y]=T_N(X,Y).
\end{equation}
This suggest to define the vector-valued $2-$form 
\begin{equation}\label{LTN}
L:=-T_N.
\end{equation}
The pair $(N,L)$ is a solution to equations \eqref{eqBrackets}.
\begin{pro}\label{pro5.3}
Let $N: TM\rightarrow TM$ be an idempotent endomorphism with $\mathrm{Im}\,N$ involutive, and $L\in\Omega^2(M;TM)$
defined by \eqref{LTN}. Then $NL=-T_N$, also
\begin{equation}
\begin{dcases}
[N,L]_{FN}=0 \\
[L,L]_{RN}=0
\end{dcases}
\end{equation}
\end{pro}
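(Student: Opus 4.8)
The plan is to establish the three assertions separately, observing at the outset that the identity $NL=-T_N$ and the first of the two conditions \eqref{eqBrackets} are immediate. Indeed, since $L=-T_N$ by \eqref{LTN}, applying $N$ and using \eqref{NTn} gives $NL=-NT_N=-T_N$, which is the claimed relation. Read through \eqref{eqKL}, this says exactly that $\tfrac12[N,N]_{FN}+\imath_L N=T_N+NL=0$, so the first line of \eqref{eqBrackets} holds automatically and only the two bracket equations require work.

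For $[N,L]_{FN}=0$ I would argue formally. Since $T_N=\tfrac12[N,N]_{FN}$, the definition \eqref{LTN} is $L=-\tfrac12[N,N]_{FN}$, hence
$$[N,L]_{FN}=-\tfrac12\,[N,[N,N]_{FN}]_{FN}.$$
The vector-valued forms carry a graded Lie algebra structure under the Fr\"olicher--Nijenhuis bracket, graded by form degree (see \cite{ksm94,mic87,niri67}), and $N\in\Omega^1(M;TM)$ is an element of odd degree; the graded Jacobi identity applied to the three coincident arguments $N,N,N$ then forces $[N,[N,N]_{FN}]_{FN}=0$. Thus $[N,L]_{FN}=0$. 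I note that this step uses neither $N^2=N$ nor involutivity of $\mathrm{Im}\,N$: it holds for any vector-valued $1$-form.

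The substantive claim is $[L,L]_{RN}=0$, where the geometric hypotheses finally enter. As $L$ is a vector-valued $2$-form, $[L,L]_{RN}\in\Omega^3(M;TM)$ equals, up to a nonzero constant, the insertion $\imath_L L$, i.e.\ the alternation of $(X,Y,Z)\mapsto L(L(X,Y),Z)$; so it is enough to prove $L(L(X,Y),Z)=0$, equivalently $T_N(T_N(X,Y),Z)=0$. The key point I would isolate is that $T_N$ vanishes whenever its first slot lies in $\mathrm{Im}\,N$. If $U\in\Upgamma(\mathrm{Im}\,N)$ then $NU=U$ (since $N^2=N$ makes $N$ the identity on its image), and \eqref{torsion} together with $N^2=N$ collapses to
$$T_N(U,Z)=[U,NZ]-N[U,NZ]=(\mathrm{Id}-N)[U,NZ].$$
Both $U$ and $NZ$ are sections of the involutive distribution $\mathrm{Im}\,N$, so $[U,NZ]\in\Upgamma(\mathrm{Im}\,N)$, on which $\mathrm{Id}-N$ vanishes; hence $T_N(U,Z)=0$. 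Applying this with $U=T_N(X,Y)$, which lies in $\mathrm{Im}\,N$ by \eqref{NTn}, gives $T_N(T_N(X,Y),Z)=0$ term by term, so $\imath_L L=0$ and $[L,L]_{RN}=0$.

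The main obstacle is concentrated in the last step: getting the Richardson--Nijenhuis combinatorics right so as to reduce $[L,L]_{RN}$ to the cyclic expression $L(L(\cdot,\cdot),\cdot)$, and then exploiting the interplay of idempotency ($N^2=N$, equivalently Lemma \ref{ct}) with involutivity of $\mathrm{Im}\,N$ to annihilate the torsion. The remaining assertions are either read off from \eqref{NTn}--\eqref{LTN} or follow formally from the graded Jacobi identity.
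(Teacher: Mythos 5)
Your proof is correct and follows essentially the same route as the paper's: the relation $NL=-T_N$ from \eqref{NTn}, the graded Jacobi identity for $[N,L]_{FN}=0$, and the reduction of $[L,L]_{RN}$ to the vanishing of $T_N(T_N(X,Y),Z)$ via the insertion $\imath_L L$. The only (cosmetic) difference is that you isolate the cleaner intermediate fact that $T_N(U,\cdot)=0$ for any section $U$ of $\mathrm{Im}\,N$, whereas the paper carries out the same computation directly with $U=T_N(X,Y)$ using Lemma \ref{ct}, \eqref{NTn} and $N^2=N$.
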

\begin{proof}
The first affirmation is a direct consequence of definition \eqref{LTN}.\\
As $L=-\frac{1}{2}[N,N]_{FN}$, and $(\Omega^{*}(M;TM),[\;,\;]_{FN})$ is a graded Lie algebra (see \cite{mic87}),
we have
\begin{equation*}
[N,L]_{FN}=-\frac{1}{2}[N,[N,N]_{FN}]_{FN}=0.
\end{equation*}
Moreover, as $L\in\Omega^2(M;TM)$, it follows that $[L,L]_{RN}\in\Omega^3(M;TM)$, $i_L\in Der^1\Omega(M)$,
and $i_{[L,L]_{RN}}\in Der^2\Omega(M)$; thus, $i_{[L,L]_{RN}}$ is completely characterized by its action on 
$0-$forms (smooth functions) and $1-$forms. But $i_{[L,L]_{RN}}$ is a tensorial derivation, so it vanishes on 
$\mathcal{C}^\infty(M)$. Now, given an $\alpha\in\Omega^1(M)$ it is $i_{[L,L]_{RN}}\in \Omega^3(M)$ so,
whenever $X,Y,Z\in\Upgamma TM$, 
\begin{align*}
(i_{[L,L]_{RN}}\alpha)(X,Y,Z)&=\alpha([L,L]_{RN}(X,Y,Z))\\
&=2i_L^2\alpha(X,Y,Z)\\
&=2\circlearrowright i_L\alpha(L(X,Y),Z)\\
&=2\circlearrowright\alpha(L(L(X,Y),Z))\\
&=2\circlearrowright\alpha(T_N(T_N(X,Y),Z))=0,
\end{align*}
(here $\circlearrowright$ denotes cyclic sum in $(X,Y,Z)$) because 
\begin{align*}
&T_N(T_N(X,Y),Z)\\
=&[N T_N(X,Y),NZ]-N[N T_N(X,Y),Z]-N[T_N(X,Y),NZ]+N^2[T_N(X,Y),Z]\\
=&N[N T_N(X,Y),NZ]-N[T_N(X,Y),Z]-N[T_N(X,Y),NZ]+N[T_N(X,Y),Z]\\
=&N[T_N(X,Y),NZ]-N[T_N(X,Y),Z]-N[T_N(X,Y),NZ]+N[T_N(X,Y),Z]=0,
\end{align*}
where we have used lemma \ref{ct}, equation \eqref{NTn}, and the property $N^2=N$.
\end{proof}
Combining \eqref{eqBrackets} with proposition \ref{pro5.3}, we arrive at the following result
\begin{thm}\label{Lct}
Let $N\in\Omega^1(M;TM)$ be such that $N^2=N$ and $\mathrm{Im}\,N$ is involutive. Then,
defining $L$ as in \eqref{LTN}, the derivations given by $D_1=\mathcal{L}_N+\imath_L$,
and $D_2=\imath_L \in\mathrm{Der}^1\Omega (M)$ are cohomology operators, that is, 
$D_1^2=0=D_2^2$.
\end{thm}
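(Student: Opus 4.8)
The plan is to reduce both assertions to the two structural conditions \eqref{eqBrackets}, which were shown to be equivalent to $D^2=0$ for any degree-one derivation of the form $\mathcal{L}_K+\imath_L$. Since $D_2=\imath_L$ is the special case $K=0$ of this decomposition, I expect it to follow almost immediately, while $D_1=\mathcal{L}_N+\imath_L$ (with $K=N$) will require checking both conditions against the identities collected in Proposition \ref{pro5.3}.

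For $D_1$, I would first dispose of the second condition $[N,L]_{FN}+\frac{1}{2}[L,L]_{RN}=0$: Proposition \ref{pro5.3} already gives $[N,L]_{FN}=0$ and $[L,L]_{RN}=0$ separately, so this holds term by term. For the first condition $\frac{1}{2}[N,N]_{FN}+\imath_L N=0$, I would use the definition $T_N=\frac{1}{2}[N,N]_{FN}$ together with $L=-T_N$ from \eqref{LTN}. The key algebraic fact is that inserting the vector-valued $2$-form $L$ into the vector-valued $1$-form $N$ yields the composition, $\imath_L N = N\circ L$, so that $\imath_L N = -N\circ T_N$. Then equation \eqref{NTn}, which is precisely where the idempotency $N^2=N$ and the involutivity of $\mathrm{Im}\,N$ enter, gives $N\circ T_N = T_N$, hence $\imath_L N = -T_N = -\tfrac{1}{2}[N,N]_{FN}$, and the first condition closes. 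Equivalently, one may observe that by \eqref{eqKL} the first condition is nothing but the equality $T_N = N\,T_N$, which is exactly \eqref{NTn}.

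For $D_2=\imath_L$ I would argue directly: since $\imath_L$ is an odd (degree-one) derivation, $D_2^2=\imath_L\circ\imath_L=\tfrac{1}{2}[\imath_L,\imath_L]=\tfrac{1}{2}\imath_{[L,L]_{RN}}$, and $[L,L]_{RN}=0$ by Proposition \ref{pro5.3}, so $D_2^2=0$ with no further work.

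The substantive content of the theorem is already isolated in Proposition \ref{pro5.3}, so the only genuine obstacle here is organizational rather than computational: one must correctly match the two terms of the Frölicher-Nijenhuis decomposition of $D^2$ against its $\mathcal{L}$- and $\imath$-parts, invoking the uniqueness in Theorem \ref{teofn}, and in particular identify the quantity $\imath_L N$ appearing in the $\mathcal{L}$-part with the composition $N\circ L$. Once that identification is secured, both cohomology conditions reduce to the already-established relations $N T_N=T_N$, $[N,L]_{FN}=0$, and $[L,L]_{RN}=0$.
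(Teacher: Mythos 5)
Your proposal is correct and follows exactly the route the paper takes: the paper derives Theorem \ref{Lct} by ``combining \eqref{eqBrackets} with Proposition \ref{pro5.3}'', which is precisely your reduction of both squares to the two bracket conditions, with the first condition for $D_1$ closed by the identification $\imath_L N=N\circ L$ and the relation $NT_N=T_N$ of \eqref{NTn}, and the remaining terms killed by $[N,L]_{FN}=0$ and $[L,L]_{RN}=0$. You merely make explicit the details (uniqueness in Theorem \ref{teofn}, the case $K=0$ for $D_2$) that the paper leaves to the reader.
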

Note that when $N$ is
a Nijenhuis tensor, automatically  $\mathrm{Im}\,N$ is involutive, since in this
case $[N(X),N(Y)]=N([N(X),Y]+[X,N(Y)]-N([X,Y]))$ for every $X,Y\in
\Gamma TM$. And if it holds that $\mathrm{Ker}\,N$ and $\mathrm{Im}\,N$ are involutive, it follows
that $N$ is Nijenhuis tensor.
\begin{cor}\label{Lctc}
Let $N\in\Omega^1(M;TM)$ be such that $N^2=N$ and $T_N=0$. Then, the derivation given by $D=\mathcal{L}_{Id-N} \in\mathrm{Der}^1\Omega (M)$ is a
cohomology operator, that is, $D^2=0$.
\end{cor}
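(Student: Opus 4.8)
The plan is to reduce the statement to Proposition \ref{TN}, whose proof shows (via equation \eqref{eqKL}, together with the correspondence between cohomology operators and Lie algebroids furnished by Propositions \ref{AC} and \ref{pro2}) that a derivation of the form $D=\mathcal{L}_K$ satisfies $D^2=0$ \emph{if and only if} the Nijenhuis torsion $T_K$ vanishes. Taking $K=Id-N$, it therefore suffices to prove that $T_{Id-N}=0$.

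First I would record that $Id-N$ is again idempotent: since $N^2=N$, we have $(Id-N)^2=Id-2N+N^2=Id-N$. The decisive observation is that the torsion is invariant under $N\mapsto Id-N$, i.e. $T_{Id-N}=T_N$. The cleanest route is through the bilinearity of the Fr\"olicher-Nijenhuis bracket. Writing $2T_{Id-N}=[Id-N,Id-N]_{FN}$ and expanding,
\[
[Id-N,Id-N]_{FN}=[Id,Id]_{FN}-[Id,N]_{FN}-[N,Id]_{FN}+[N,N]_{FN},
\]
I would kill the first three terms using identities already in hand. By the Example, $\mathcal{L}_{Id}=\mathrm{d}$, so $\mathcal{L}_{[Id,Id]_{FN}}=[\mathcal{L}_{Id},\mathcal{L}_{Id}]=[\mathrm{d},\mathrm{d}]=2\,\mathrm{d}^2=0$, whence $[Id,Id]_{FN}=0$ by the uniqueness in the definition of the FN bracket. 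Likewise, the proposition asserting that $\mathcal{L}_K$ commutes with $\mathrm{d}$ for every $K$ gives $\mathcal{L}_{[N,Id]_{FN}}=[\mathcal{L}_N,\mathcal{L}_{Id}]=[\mathcal{L}_N,\mathrm{d}]=0$, so $[N,Id]_{FN}=[Id,N]_{FN}=0$ (the two agree by graded symmetry of the bracket on $1$-forms). Hence $[Id-N,Id-N]_{FN}=[N,N]_{FN}$, and consequently $T_{Id-N}=\tfrac{1}{2}[Id-N,Id-N]_{FN}=\tfrac{1}{2}[N,N]_{FN}=T_N$.

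Since $T_N=0$ by hypothesis, this yields $T_{Id-N}=0$, and Proposition \ref{TN} then delivers $D^2=0$, as claimed. The only real content is the identity $T_{Id-N}=T_N$, and it is the step I expect to be the mild obstacle; the bracket computation above dissolves it using only previously established properties. Alternatively, one may obtain the same identity by direct expansion of the torsion formula \eqref{torsion}: after substituting $(Id-N)X=X-NX$ and using $(Id-N)^2=Id-N$ together with $N^2=N$, every term linear in a single factor of $N$ cancels, leaving precisely the expression $[NX,NY]-N[NX,Y]-N[X,NY]+N[X,Y]$ that equals $T_N(X,Y)$ in the idempotent case treated above.
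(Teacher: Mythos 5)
Your proof is correct and follows essentially the same route as the paper's: both expand $[Id-N,Id-N]_{FN}$ by bilinearity, observe that $[Id,Id]_{FN}$ and the cross terms vanish, and conclude $T_{Id-N}=0$ from $T_N=0$ before invoking the earlier machinery showing that $\mathcal{L}_K$ squares to zero exactly when $T_K=0$. The only (welcome) difference is that you justify $[N,Id]_{FN}=0$ via $\mathcal{L}_{Id}=\mathrm{d}$ and $[\mathcal{L}_N,\mathrm{d}]=0$, which is sounder than the paper's appeal to the sign identity $[Id,N]_{FN}=-[N,Id]_{FN}$ (the Fr\"olicher--Nijenhuis bracket is symmetric in degree one, so the two cross terms do not cancel against each other --- they each vanish, as your argument shows), and you finish via Proposition \ref{TN} where the paper cites Theorem \ref{Lct}.
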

\begin{proof}
First we observe that $(Id-N)^2=Id-N$, moreover using the identity $[Id,N]_{FN}=-[N,Id]_{FN}$, we get  
\begin{align*}
T_{\mathrm{Id}-N}=&[Id-N,Id-N]_{FN}\\
=&[Id,Id]_{FN}-[N,Id]_{FN}-[Id,N]_{FN}+[N,N]_{FN}\\
=&0
\end{align*}
Then simply apply theorem \ref{Lct}.
\end{proof}
Applying formula \eqref{eq3}, the Lie algebroid induced by the operator $D_1=\mathcal{L}_N+\imath_L$ of Theorem \ref{Lct}
can be explicitly described as follows.
\begin{thm}\label{exampletm}
Let $N\in\Omega^1(M;TM)$ be such that $N^2=N$ and $\mathrm{Im}\,N$ is involutive. Then, there exists a Lie algebroid structure $(TM,q,[\!|\, ,\,|\!])$ with anchor map $q=N$ and bracket
\begin{equation*}
[\!|X,Y|\!]=[X,Y]_N+T_N(X,Y).
\end{equation*}
\end{thm}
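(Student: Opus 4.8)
The plan is to obtain this Lie algebroid as the one produced by the general correspondence of Section~\ref{sec3} (Proposition~\ref{pro2}) applied to the cohomology operator furnished by Theorem~\ref{Lct}. First I would invoke Theorem~\ref{Lct}: since $N^2=N$ and $\mathrm{Im}\,N$ is involutive, the derivation $D_1=\mathcal{L}_N+\imath_L$ with $L=-T_N$ (as defined in~\eqref{LTN}) satisfies $D_1^2=0$, and is therefore a genuine cohomology operator on $\Omega(M)$. Proposition~\ref{pro2}, in the reformulation stated for $E=TM$ at the start of Section~\ref{sec3}, then guarantees that such a $D_1$ induces a Lie algebroid structure on $TM$ whose anchor is the $K$-term of its Fr\"olicher-Nijenhuis decomposition and whose bracket is read off from formula~\eqref{eq3}.

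It then remains only to substitute the explicit data into that formula. The decomposition of $D_1$ has $K=N$, so the anchor is $q=N$. Formula~\eqref{eq3} reads $\lie{X}{Y}=[X,Y]_K-L(X,Y)$; with $K=N$ and $L=-T_N$ this becomes
$$
[\!|X,Y|\!]=[X,Y]_N-(-T_N(X,Y))=[X,Y]_N+T_N(X,Y),
$$
where $[X,Y]_N=[NX,Y]+[X,NY]-N[X,Y]$ is the contracted bracket~\eqref{eq2}. This is exactly the anchor and bracket asserted in the statement.

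The hard part is essentially absent here, since all the analytic content --- the nilpotency $D_1^2=0$, equivalently the two structure equations~\eqref{eqBrackets} --- was already settled in Theorem~\ref{Lct} by means of Proposition~\ref{pro5.3}. The only genuine point of care is the sign bookkeeping in passing from $L=-T_N$ to the bracket. As a consistency check I would verify the first of~\eqref{eqBrackets}, which by~\eqref{eqKL} reads $T_N=-NL$; with $L=-T_N$ this says $T_N=NT_N$, which is precisely identity~\eqref{NTn}. Thus the present statement is just the explicit unwinding of Theorem~\ref{Lct} through the correspondence of Proposition~\ref{pro2}.
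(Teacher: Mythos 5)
Your proposal is correct and follows exactly the paper's route: the theorem is obtained by applying formula \eqref{eq3} to the cohomology operator $D_1=\mathcal{L}_N+\imath_L$ of Theorem \ref{Lct}, with $K=N$ and $L=-T_N$, which the paper states in one line immediately before the theorem. Your sign bookkeeping and the consistency check against \eqref{eqKL} and \eqref{NTn} are both accurate.
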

We remark that the bracket on $TM$ constructed this way, is given by the sum of the
deformed bracket $[\cdot ,\cdot ]_N$ \emph{and} the torsion $T_N$. Another useful
expresion for this bracket is
\begin{equation}\label{bracketfull}
[\!|X,Y|\!]=[NX,NY]+(\mathrm{Id}-N)([NX,Y]+[X,NY])\,.
\end{equation} 

\section{Relation with the foliated exterior differential}\label{sec5}
Let $\mathcal{F}$ be a regular foliation on a manifold $M$ and $T\mathcal{F}\subset TM$ its tangent bundle. Denote by $j:T\mathcal{F}\hookrightarrow TM$ the  canonical injection. To any such a
foliation we can associate a Lie algebroid in two equivalent ways.
\begin{enumerate}[(a)]
\item The space of tangent sections $\Upgamma (T\mathcal{F})$ is closed with respect to the
Lie bracket of vector fields on $M$, by the integrability of $\mathcal{F}$, so it carries a
natural (foliated) bracket $[\cdot ,\cdot ]_{\mathcal{F}}$ which is just the restriction of the
Lie bracket to $\mathcal{F}$. Then,
$$
(E=T\mathcal{F},[\cdot ,\cdot ]_{\mathcal{F}},q=j:T\mathcal{F}\hookrightarrow TM)\,,
$$
is a Lie algebroid, called the Lie algebroid of the
foliation $\mathcal{F}$ (see, for example, \cite{fer02}). In this case the corresponding cohomology operator
is the foliated exterior differential 
$\mathrm{d}_{\mathcal{F}}:
\Upgamma(\Uplambda^p T^*\mathcal{F})\to\Upgamma(\Uplambda^{p+1} T^*\mathcal{F})$, which
gives rise to the foliated DeRham cohomology.

\item\label{itemb} Let $\mathbb{H}$ be an Ehresmann connection on $(M,\mathcal{F})$ complementary to $T\mathcal{F}$,
that is, $TM=\mathbb{H}\oplus\mathcal{F}$. Denote by $\gamma$ the associated projection on
$\mathcal{F}$, $\gamma :\mathbb{H}\oplus\mathcal{F}\to\mathcal{F}$, so 
$\mathbb{H}=\ker\gamma$ and $\mathcal{F}=\mathrm{Im}\gamma$. It follows that
$\gamma$ is a vector-valued $1-$form, and then the curvature of the connection is defined as
the Nijenhuis torsion
$$
R=\frac{1}{2}[\gamma ,\gamma]_{FN}=T_\gamma\,.
$$
On the whole $TM$ we can construct the Lie algebroid structure given by Theorem \ref{exampletm}.
In this case, the explicit expression of the bracket can be easily computed from 
\eqref{bracketfull}:
\begin{align*}
&[X,X']_\gamma = 0\,,\mbox{ for all }X,X'\in\mathbb{H}\\
&[X,Y]_\gamma = [X,Y]-\gamma [X,Y]\,,\mbox{ for all }X\in\mathbb{H},Y\in\Upgamma (T\mathcal{F})\\
&[Y,X]_\gamma = [Y,X]-\gamma [Y,X]\,,\mbox{ for all }X\in\mathbb{H},Y\in\Upgamma (T\mathcal{F})\\
&[Y,Y']_\gamma = [Y,Y']\,,\mbox{ for all }Y,Y'\in\Upgamma (T\mathcal{F})\,.
\end{align*}
The anchor map, as we know, is given by $q=\gamma$. Obviously, this algebroid structure coincides
with the previous one when restricted to $\Upgamma (T\mathcal{F})$, independently of the
chosen connection $\gamma$.
\end{enumerate}

In what follows, we offer an interpretation of the foliated cohomology within the
framework described in item \eqref{itemb}. 
The splitting induced by $\gamma$, $TM=\mathbb{H}\oplus T\mathcal{F}$, define a bigrading
on $\Omega (M)$. A differential form $\omega$ on a foliated manifold is said to be of 
type $(p,q)$ if it has degree $p+q$ and $\omega(X_1,\dots,X_{p+q})=0$ 
whenever the arguments contain more than $q$ vector fields in $\Upgamma (T\mathcal{F})$, or
more than $p$ in $\mathbb{H}$. According to this bigrading, we have a decomposition
of the exterior differential on $M$,
\begin{equation}\label{ddesc}
\dd = \dd_{1,0}+\dd_{2,-1}+\dd_{0,1}\,,
\end{equation}
(see \cite{va73}), where $\dd_{i,j}:\Omega^{p,q}(M)\to\Omega^{p+i,q+j}$. 
\begin{rem}\label{equivalence}
Denoting by $h:\se T^*\mathcal{F})\to\se\mathbb{H}^0)$ the natural
identification, we notice that $\dd_{0,1}$ is a $\gamma-$dependent extension of the
foliated exterior differential, that is, 
$$
(\dd_{0,1}\circ h)\,\omega = (h\circ \dd_{\mathcal{F}})\,\omega\,,
$$
for all $\omega\in\se T^*\mathcal{F})$.
\end{rem}

The Fr\"olicher-Nijenhuis decomposition of the operators appearing in the decomposition
\eqref{ddesc} can be readily computed from the results in section \ref{sec1}:
\begin{align*}
&\mathrm{d}_{1,0} =\, \mathcal{L}_{\mathrm{Id}-\gamma} +\imath_{2R} \\
&\mathrm{d}_{0,1} =\, \mathcal{L}_{\gamma} - \imath_{R} \\
&\mathrm{d}_{2, -1} =\, -\imath_{R}\, .
\end{align*}

From \eqref{ddesc} and $d^2=0$, it follows that

$$
\mathrm{d}^2_{0,1}=0=\mathrm{d}^2_{2, -1}\,.
$$

Moreover, the derivation $\mathrm{d}_{1,0}$ is a cohomology operator if and only if
the curvature of the  connection $\gamma$ vanishes.

\begin{thm}
The cohomology operator associated to the Lie algebroid on $TM$ described in item \eqref{itemb} above, is given by
$$
D=\dd_{0,1}\,.
$$
Moreover, the Lie algebroid of the foliation $\mathcal{F}$ and the restriction to $T\mathcal{F}$ of
the Lie algebroid associated to a connection $\gamma$ on $TM$, coincide. Thus,
the complexes associated to the cohomology operators $\dd_{\mathcal{F}}$ and $\dd_{0,1}|_{\se\mathbb{H}^0)}$
are isomorphic. 
\end{thm}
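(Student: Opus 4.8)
The statement bundles three assertions, and the plan is to obtain each by specialising the machinery of Section~\ref{sec4} to the projection $\gamma$ and then invoking Remark~\ref{equivalence}. First I would pin down the cohomology operator of the algebroid in item~\eqref{itemb}. Since $\gamma$ is idempotent ($\gamma^2=\gamma$, being the projection onto $\mathcal{F}=\mathrm{Im}\,\gamma$) with $\mathrm{Im}\,\gamma=T\mathcal{F}$ involutive, Theorem~\ref{Lct} applies with $N=\gamma$ and, by \eqref{LTN}, $L=-T_\gamma=-R$; the associated cohomology operator is therefore $D_1=\mathcal{L}_\gamma+\imath_L=\mathcal{L}_\gamma-\imath_R$. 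The Lie algebroid of item~\eqref{itemb} is, by construction, exactly the one that Theorem~\ref{exampletm} attaches to this $D_1$ through \eqref{eq3}, so by the one-to-one correspondence between Lie algebroids on $TM$ and cohomology operators on $\Omega(M)$ (Proposition~\ref{pro2} and the remark following it) the operator recovered from this algebroid is $D_1$ itself. Comparing with the decomposition $\dd_{0,1}=\mathcal{L}_\gamma-\imath_R$ recorded just above the statement yields $D=D_1=\dd_{0,1}$.

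For the coincidence on $T\mathcal{F}$ I would restrict \eqref{bracketfull}. If $Y,Y'\in\Upgamma(T\mathcal{F})=\Upgamma(\mathrm{Im}\,\gamma)$ then $\gamma Y=Y$, $\gamma Y'=Y'$, and $[Y,Y']\in\Upgamma(T\mathcal{F})$ by involutivity, so $\gamma[Y,Y']=[Y,Y']$; hence $[\!|Y,Y'|\!]=[\gamma Y,\gamma Y']+(\mathrm{Id}-\gamma)([\gamma Y,Y']+[Y,\gamma Y'])$ reduces to $[Y,Y']$, which is exactly the foliated bracket $[\cdot,\cdot]_{\mathcal{F}}$. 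Meanwhile the anchor restricts to $\gamma|_{T\mathcal{F}}=\mathrm{Id}=j$. Thus the algebroid of item~\eqref{itemb}, cut down to $T\mathcal{F}$, carries the same anchor and bracket as the algebroid of the foliation in item~(a), and the two coincide independently of the chosen $\gamma$.

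Finally, for the isomorphism of complexes I would combine the previous steps with Remark~\ref{equivalence}. The bigrading identifies $\se\mathbb{H}^0)$ with the column $\Omega^{0,\bullet}(M)$ (a section of $\Uplambda^q\mathbb{H}^0$ annihilates every tuple containing a horizontal vector, hence is a $(0,q)$-form), and since $\dd_{0,1}:\Omega^{p,q}\to\Omega^{p,q+1}$ it preserves $\se\mathbb{H}^0)$; together with $\dd_{0,1}^2=0$ this makes $(\se\mathbb{H}^0),\dd_{0,1}|)$ a cochain complex. The natural identification $h:\se T^*\mathcal{F})\to\se\mathbb{H}^0)$ of Remark~\ref{equivalence} is an isomorphism of graded modules satisfying $\dd_{0,1}\circ h=h\circ\dd_{\mathcal{F}}$, so it is an isomorphism of cochain complexes and induces an isomorphism in cohomology. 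I expect the only delicate point to be this last bookkeeping --- confirming that $\se\mathbb{H}^0)$ is precisely $\Omega^{0,\bullet}(M)$ and that $\dd_{0,1}$ stabilises it, so that Remark~\ref{equivalence} genuinely upgrades $h$ from an intertwining to a chain isomorphism; the first two assertions are direct specialisations of the idempotent-endomorphism results of Section~\ref{sec4}.
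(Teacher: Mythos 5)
Your proposal is correct and follows essentially the same route as the paper, whose proof is a one-line citation of Proposition \ref{AC}, Example \ref{tmcase} (equivalently Theorem \ref{Lct} with $L=-T_\gamma=-R$, as you use), the observation in item \eqref{itemb} that the restriction to $\Upgamma(T\mathcal{F})$ is $\gamma$-independent, and Remark \ref{equivalence}. You merely make explicit the details the paper leaves implicit (matching $\mathcal{L}_\gamma-\imath_R$ against the displayed decomposition of $\dd_{0,1}$, restricting \eqref{bracketfull}, and checking that $h$ is a chain isomorphism), all of which are sound.
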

\begin{proof}
The first statement is just a consequence of Theorem \ref{AC} and example \ref{tmcase}. The second one follows from preceding results, along with remark \ref{equivalence}.
\end{proof}
\begin{rem}
Notice that, once this equivalence has been established, given the Lie algebroid structure
$N:TM\to TM$, with $N$ idempotent, we could define the associated foliated cohomology independently of any regularity condition on $\mathcal{F}$, just using the decomposition
$$
\mathrm{d}=\mathcal{L}_{Id}= \left(\mathcal{L}_{\mathrm{Id}-\gamma} +2\imath_{R}\right)
+\left( \mathcal{L}_{\gamma} - \imath_{R}\right) -\imath_{R}\,.
$$ 
\end{rem}

\section{Complex Lie algebroids associated to complex structures}\label{sec6}
Recall that an almost-complex structure on a manifold $M$ is a vector-valued $1-$form $J\in\Omega^1(M;TM)$
such that
$J^2=-\mathrm{Id}_{TM}$. In order to diagonalize the endomorphism $J$, 
we work in the complexified tangent bundle
$T^\mathbb{C}M$, and extend by
$\mathbb{C}-$linearity all real endomorphisms and differential operators on $TM$ 
(with a little abuse of notation, 
we will denote these extensions by the same symbols as their real counterparts). 
The Lie bracket of vector fields can also be extended by $\mathbb{C}-$linearity. 

The almost-complex structure $J$ is said to be integrable if its Nijenhuis torsion vanishes,
that is, if for every $X,Y\in \mathcal{X}(M)$,
$$
T_J(X,Y)=\frac{1}{2}[J,J]_{FN}(X,Y)=[JX,JY]-J[JX,Y]-J[X,JY]-[X,Y]=0.
$$
The Newlander-Nirenberg theorem states that this happens if and only if $M$ has he structure
of a complex manifold.

From the condition $J^2= -\mathrm{Id}_{TM}$, we know that $J$ has the eigenvalues $\pm i$. 
If we define the  projection operators
\begin{equation*}
p^{\pm}:=\frac{1}{2}(\mathrm{Id}\mp iJ):T^{\mathbb{C}}M\rightarrow T^{\mathbb{C}}M,
\end{equation*}
we get the usual properties 
\begin{align*}
(p^{\pm})^2&=p^{\pm} \\
p^++p^-&=\mathrm{Id}_{T^{\mathbb{C}}M} \\
p^+\circ p^-&=0=p^-\circ p^+ \, ,
\end{align*}
which determine the subbundle decomposition $T^{\mathbb{C}}M=T^+M\oplus T^-M$, where 
\begin{equation*}
\Upgamma T^{\pm}M=\{Z\in \Upgamma T^{\mathbb{C}}M:JZ=\pm iZ\}.
\end{equation*}
The elements of $\Upgamma T^+M$ are called holomorphic vector fields, and those of $\Upgamma T^-M$
anti-holomorphic.
Notice that $\mathrm{Im}p^\pm =T^\pm M$, and $\ker p^\pm =T^\mp M$, 
and that if $Z\in \Upgamma T^+M$,
then its complex
conjugate $\bar{Z}\in \Upgamma T^-M$ (and vice versa).

We will need the following technical results.
\begin{lem}
Let $J\in\Omega^1(M;TM)$ be an almost-complex structure on $M$, 
and let $T_J$ be its Nijenhuis torsion.
If $T_J=0$, then its complex extension also satisfies $\mathcal{T}_J=0$.
\end{lem}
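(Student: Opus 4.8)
The plan is to exploit the fact that the Nijenhuis torsion is a \emph{tensor}, so that its complex extension is the unique $\mathbb{C}$-bilinear object agreeing with $T_J$ on real arguments; vanishing on real vector fields then forces vanishing everywhere by bilinearity. No integrability input (e.g.\ Newlander--Nirenberg) is needed for this direction.

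First I would recall that, by formula \eqref{torsion}, for the complexified endomorphism $J:T^{\mathbb{C}}M\to T^{\mathbb{C}}M$ one has
$$\mathcal{T}_J(Z,W)=[JZ,JW]-J[JZ,W]-J[Z,JW]-[Z,W],$$
for $Z,W\in\Upgamma T^{\mathbb{C}}M$, where $J$ and the bracket now denote the $\mathbb{C}$-linear extensions. The key observation is that this expression is $\mathcal{C}^\infty(M;\mathbb{C})$-bilinear and skew-symmetric in $Z,W$: indeed $\mathcal{T}_J=\tfrac{1}{2}[J,J]_{FN}$ is a vector-valued $2$-form, hence tensorial, and tensoriality is inherited under the $\mathbb{C}$-linear extension of all the operations involved (the Leibniz terms produced by the extended Lie bracket cancel exactly as in the real case). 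In particular $\mathcal{T}_J$ is $\mathbb{C}$-bilinear with respect to constant complex scalars.

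Next I would use the splitting $T^{\mathbb{C}}M=TM\oplus iTM$, so that every complex vector field is of the form $Z=X+iY$ with $X,Y\in\Upgamma TM$. Writing $Z=X_1+iY_1$ and $W=X_2+iY_2$ and expanding $\mathcal{T}_J(Z,W)$ by $\mathbb{C}$-bilinearity, the value becomes a $\mathbb{C}$-linear combination of terms of the form $\mathcal{T}_J(U,V)$ with $U,V\in\Upgamma TM$. On such real arguments the $\mathbb{C}$-linear extensions of $J$ and of the Lie bracket coincide by definition with their real counterparts (they map $\Upgamma TM$ into itself), so $\mathcal{T}_J(U,V)=T_J(U,V)$ for all $U,V\in\Upgamma TM$. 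By hypothesis $T_J=0$, hence every term in the expansion vanishes, and therefore $\mathcal{T}_J(Z,W)=0$ for arbitrary $Z,W\in\Upgamma T^{\mathbb{C}}M$.

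The step requiring care---rather than a genuine obstacle---is the justification that the complex extension of the torsion agrees, on real arguments, with the real torsion. This hinges on the extension being carried out $\mathbb{C}$-linearly on each operation separately together with the inclusion $J(\Upgamma TM)\subseteq\Upgamma TM$; once this is granted, the conclusion is immediate from bilinearity. I expect the write-up to be short, with the only real content being the remark on tensoriality of $\mathcal{T}_J$ and the compatibility of the extensions on real fields.
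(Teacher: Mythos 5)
Your proposal is correct and follows essentially the same route as the paper: the paper's proof is precisely the explicit version of your bilinear expansion, computing $\mathcal{T}_J(Z,W)=T_J(X,X')-T_J(Y,Y')+i\bigl(T_J(Y,X')+T_J(X,Y')\bigr)$ for $Z=X+iY$, $W=X'+iY'$ and concluding from $T_J=0$. The only difference is presentational: you argue abstractly via $\mathbb{C}$-bilinearity and agreement on real arguments, while the paper writes out the resulting combination of real torsions; the appeal to tensoriality is harmless but not needed, since bilinearity over constant complex scalars already suffices.
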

\begin{proof}
Let $Z=X+iY, W=X'+iY'$. A straightforward computation shows that
\begin{align*}
\mathcal{T}_J(Z,W)&=[JZ,JW]-J[JZ,W]-J[Z,JW]-[Z,W]\\
&=T_J(X,X')-T_J(Y,Y')+i(T_J(Y,X')+T_J(X,Y'))
\end{align*}
\end{proof}
\begin{pro}\label{T}
An almost-complex structure $J\in\Omega^1(M;TM)$ is integrable if and only if 
$[T^+M,T^+M]\subset T^+M$,
that is, the distribution defined by the holomorphic vector fields is involutive.
\end{pro}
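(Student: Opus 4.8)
The plan is to recast the integrability condition $T_J=0$ as a vanishing statement for the projection $p^-$, and then read off both implications. By the preceding Lemma (together with the trivial remark that $\mathcal{T}_J$ restricts to $T_J$ on real arguments), integrability of $J$ is equivalent to the vanishing of the complexified torsion $\mathcal{T}_J$ on all of $T^{\mathbb{C}}M$. So I would work entirely with $\mathcal{T}_J$ and the eigenbundle splitting $T^{\mathbb{C}}M=T^+M\oplus T^-M$, exploiting $\mathbb{C}$-bilinearity of the extended bracket.

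First I would evaluate $\mathcal{T}_J$ on a pair of holomorphic fields. For $Z,W\in\Upgamma T^+M$ we have $JZ=iZ$ and $JW=iW$, so substituting into
$$
\mathcal{T}_J(Z,W)=[JZ,JW]-J[JZ,W]-J[Z,JW]-[Z,W]
$$
and collecting terms gives, after a short calculation, $\mathcal{T}_J(Z,W)=-2\bigl([Z,W]+iJ[Z,W]\bigr)=-4\,p^-[Z,W]$. This single identity already yields the forward implication: if $J$ is integrable, then $\mathcal{T}_J\equiv 0$, hence $p^-[Z,W]=0$, i.e.\ $[Z,W]\in\ker p^-=T^+M$, so the distribution of holomorphic vector fields is involutive.

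For the converse I must verify that involutivity of $T^+M$ forces $\mathcal{T}_J$ to vanish on \emph{every} pair of complex fields, not merely on holomorphic ones; this is the step requiring care. Decomposing the arguments according to $T^{\mathbb{C}}M=T^+M\oplus T^-M$, I would compute the remaining two blocks by the same substitution. On a mixed pair $Z\in\Upgamma T^+M$, $W\in\Upgamma T^-M$ (so $JZ=iZ$, $JW=-iW$) the four terms cancel identically, giving $\mathcal{T}_J(Z,W)=0$ with no hypothesis whatsoever; on a pair $Z,W\in\Upgamma T^-M$ one finds symmetrically $\mathcal{T}_J(Z,W)=-4\,p^+[Z,W]$. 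The $(T^-M,T^-M)$ block is then controlled by conjugation: since the complexified Lie bracket commutes with complex conjugation and $\overline{T^+M}=T^-M$, involutivity of $T^+M$ forces involutivity of $T^-M$, whence $p^+[Z,W]=0$. With all three blocks vanishing we conclude $\mathcal{T}_J\equiv 0$, hence $T_J=0$, and $J$ is integrable.

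The main obstacle is precisely this coverage issue in the reverse direction: involutivity of $T^+M$ looks like a weaker hypothesis than $\mathcal{T}_J\equiv 0$, and the argument hinges on the two structural observations that the mixed block vanishes automatically and that the $T^-M$ block is slaved to the $T^+M$ block through conjugation. The holomorphic-block computation itself is routine once the eigenvalue relations $JZ=iZ$ are inserted.
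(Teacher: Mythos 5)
Your proposal is correct and follows essentially the same route as the paper: complexify the torsion via the preceding lemma, compute $\mathcal{T}_J$ blockwise on the $\pm i$ eigenbundles (the holomorphic block giving $-2([Z,W]+iJ[Z,W])$, the mixed block vanishing identically, the anti-holomorphic block handled by conjugation). Packaging the key identity as $\mathcal{T}_J(Z,W)=-4\,p^-[Z,W]$ is a cosmetic, if tidy, difference.
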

\begin{proof}
Consider first the case of $J\in\Omega^1(M;TM)$ integrable. 
By the preceding lemma, if $T_J= 0$, then also $\mathcal{T}_J=0$. 
If $Z,W$ are holomorphic vector fields, we have
$JZ=iZ$ and $JW=iW$, so
\begin{align*}
0=\mathcal{T}_J(Z,W)&=[iZ,iW]-J[iZ,W]-J[Z,iW]-[Z,W]\\
&=-[Z,W]-iJ[Z,W]-iJ[Z,W]-[Z,W]\\
&=-2([Z,W]+iJ[Z,W]),
\end{align*}
that is $J[Z,W]=-\frac{1}{i}[Z,W]=i[Z,W]$, so $[Z,W]$ is holomorphic.\\
Reciprocally, assume that $[Z,W]$ is a holomorphic vector field whenever $Z,W$ are. Given arbitrary
vector fields $X,Y\in\Upgamma TM$, we can rewrite them in the form $X=Z+\bar{Z}$, $Y=W+\bar{W}$ for
some $Z,W\in \Upgamma T^+M$. Then, it is readily proved that 
$\mathcal{T}_J(Z,\bar{W})=\mathcal{T}_J(\bar{Z},W)=0$,
and
\begin{equation*}
T_J(X,Y)=\mathcal{T}_J(Z,W)+\mathcal{T}_J(\bar{Z},\bar{W}).
\end{equation*}
But for holomorphic vector fields we already know that $\mathcal{T}_J(Z,W)=-2([Z,W]+iJ[Z,W])$, and,
as $[Z,W]$ is holomorphic by assumption, $J[Z,W]=i[Z,W]$. 
Therefore $\mathcal{T}_J(Z,W)=-2([Z,W]+i^2[Z,W])=0$.
A similar calculation shows that $\mathcal{T}_J(\bar{Z},\bar{W})=0$, 
and we conclude that $J$ is integrable.
\end{proof}

We are now ready to construct a complex Lie algebroid canonically
associated to a complex manifold.
Recall (see \cite{wei07}) that a complex Lie algebroid over the manifold $M$
is given by a complex vector bundle $E$ over $M$ endowed with a complex Lie
algebra structure on its space of sections $\Upgamma E$,
together with a bundle map $q:E\to T^\mathbb{C}M$ (the anchor map) satisfying
the Leibniz rule
$$
[A ,fB ]=f[A ,B ]+(qA)(f)B ,
$$
for any $A ,B \in\Upgamma E$, and $f:M\to\mathbb{C}$ smooth. 
The main example of a complex Lie algebroid
is given by the inclusion $q:T^+M\hookrightarrow T^\mathbb{C}M$
of the subbundle of holomorphic vector fields
on a complex manifold. Notice that the vector bundle in this case,
$E=T^+M$, is not $T^\mathbb{C}M$.
\begin{rem}
We want to make use of the property $(p^+)^2=p^+$ of the projection operator 
$p^+=\frac{1}{2}(\mathrm{Id}-iJ)$,
and the results in section \ref{sec4} to construct an algebroid where $p^+$ 
will be the anchor map, and the whole $T^\mathbb{C}M$ the corresponding bundle.
However, as $\mathrm{Im}p^+$ must be involutive, proposition \ref{T} forces us
to restrict ourselves to \emph{complex} manifolds.
\end{rem}
Our main result is, then, the following.
\begin{thm}\label{CA}
Let $J\in\Omega^1(M;TM)$ be a complex structure on the manifold $M$.
There exists a complex Lie algebroid
structure on $M$, $(T^{\mathbb{C}}M, q, [\![\, ,\,]\!])$,
where the anchor map is $q=p^+=\frac{1}{2}(Id-iJ)$,
and the bracket
$$
[\![Z,W]\!]=[Z,W]_{p^+}=\frac{1}{2}([Z,W]-i[Z,W]_J).
$$
\end{thm}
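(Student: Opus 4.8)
The plan is to verify that the idempotent $p^+=\frac{1}{2}(\mathrm{Id}-iJ)$ together with the vector-valued $2$-form $L=-T_{p^+}$ satisfies the hypotheses of Theorem \ref{exampletm}, so that the complex Lie algebroid structure is produced automatically by that theorem applied in the complex setting. First I would record that $p^+$ is idempotent, $(p^+)^2=p^+$, which is listed among the properties of the projection operators above. The second hypothesis needed is that $\mathrm{Im}\,p^+=T^+M$ be involutive; this is exactly the content of Proposition \ref{T}, since $J$ is assumed to be a \emph{complex} (hence integrable) structure, so $[T^+M,T^+M]\subset T^+M$. Having checked both hypotheses, Theorem \ref{exampletm} (valid in the complex case, as remarked in Section \ref{sec4}) immediately yields a complex Lie algebroid $(T^{\mathbb{C}}M,q,[\!|\,,\,|\!])$ with anchor $q=p^+$ and bracket $[\!|Z,W|\!]=[Z,W]_{p^+}+T_{p^+}(Z,W)$.

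The remaining task is to identify this bracket with the stated formula and to simplify it. I would use the alternative expression \eqref{bracketfull},
$$
[\!|Z,W|\!]=[p^+Z,p^+W]+(\mathrm{Id}-p^+)\left([p^+Z,W]+[Z,p^+W]\right),
$$
and substitute $p^+=\frac{1}{2}(\mathrm{Id}-iJ)$ and $\mathrm{Id}-p^+=p^-=\frac{1}{2}(\mathrm{Id}+iJ)$. Expanding and collecting terms, using $\mathbb{C}$-bilinearity of the extended Lie bracket and the definition $[Z,W]_J=[JZ,W]+[Z,JW]-J[Z,W]$, I expect the $T^-M$-valued correction terms to cancel against pieces coming from the torsion, leaving the clean expression $[\![Z,W]\!]=\frac{1}{2}([Z,W]-i[Z,W]_J)$. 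An equivalent and perhaps cleaner route, paralleling Lemma \ref{B}, is to observe that since $T^+M$ is involutive the bracket should collapse to a multiple of $[p^+Z,p^+W]$ projected back; one checks directly that $[\![Z,W]\!]=[Z,W]_{p^+}$ by verifying that the torsion term $T_{p^+}(Z,W)$ lands in $T^-M$ and is absorbed.

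The Leibniz rule and the (complex) Jacobi identity come for free, since they are already guaranteed by Theorem \ref{exampletm}; no independent verification is required. The main obstacle will be the bookkeeping in the expansion of the bracket: keeping track of which terms are $J$-eigenvector projections and confirming the precise cancellation that turns the three-term expression \eqref{bracketfull} into the two-term contracted bracket $[Z,W]_{p^+}$. This is where the involutivity of $T^+M$ (equivalently $T_J=0$) must be invoked at just the right moment, so I would isolate that computation as the heart of the argument and present it carefully while leaving the structural axioms to the cited theorem.
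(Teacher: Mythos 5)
Your plan is essentially the paper's: apply the idempotent-endomorphism machinery of Section \ref{sec4} to $p^+$, whose image $T^+M$ is involutive by Proposition \ref{T} precisely because $J$ is integrable, and then simplify the resulting bracket. One correction, though, on the step you rightly single out as the heart of the argument: the torsion term does not disappear because it ``lands in $T^-M$ and is absorbed'' --- by equation \eqref{NTn} the torsion of an idempotent with involutive image satisfies $p^+\mathcal{T}_{p^+}=\mathcal{T}_{p^+}$, so it takes values in $T^+M$, not $T^-M$. The reason it drops out is that it vanishes identically: bilinearity of $[\cdot,\cdot]_{FN}$ gives $\mathcal{T}_{p^+}=\tfrac12[p^+,p^+]_{FN}=-\tfrac14\mathcal{T}_J=0$ for a complex structure, which is exactly the identity the paper invokes; your brute-force expansion of \eqref{bracketfull} would also reveal this, but the one-line scaling argument is both cleaner and the intended mechanism. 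With that replacement, the remaining computation $[Z,W]_{p^+}=\tfrac12([Z,W]-i[Z,W]_J)$ is the same routine expansion the paper performs.
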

\begin{proof}
First of all, notice that given a complex structure $J$ on $M$, there is a relation
$$
\mathcal{T}_{p^+}=-\frac{1}{4}\mathcal{T}_J,
$$
which can be proved by a direct computation. According to the results on section \ref{sec4},
the idempotent endomorphism $p^+$ will induce a Lie algebroid $T^\mathbb{C}M\to T^\mathbb{C}M$
where the anchor map is precisely $q=p^+$, and the bracket
$
[\![Z,W]\!]=[Z,W]_{p^+}-\mathcal{T}_{p^+}.
$
However, the above comment implies that $\mathcal{T}_{p^+}=0$, so the bracket is simply
$$
[\![Z,W]\!]=[Z,W]_{p^+}. 
$$
A further straightforward computation shows that
$$
[Z,W]_{p^+}=\frac{1}{2}([Z,W]-i[Z,W]_J).
$$
\end{proof}
Notice that this Lie algebroid is different to that obtained by applying proposition \ref{TN},
where the resulting bracket would be $[\!|Z,W|\!]=[Z,W]_J$ (that is, the $\mathbb{C}-$linear
extension of $[\, ,\, ]_J$ to $T^\mathbb{C}M$).
\begin{rem}
The proof extends trivially to the case of an endomorphism $J\in\Omega^1(M;TM)$ 
such that $J^2=-\epsilon^2Id_{TM}$ with $\epsilon \in\mathbb{R}-\{0\}$.
\end{rem}

\section{Lie algebroids associated to product structures}\label{sec7}
Consider now an almost-product structure on the manifold $M$, that is, a vector bundle endomorphism
$P:TM\rightarrow TM$ such that $P^2=\mathrm{Id}_{TM}$. It has locally constant rank and it defines
two projection operators associated to its eigenvalues $\lambda=\pm 1$, 
$p^{\pm}:=\frac{1}{2}(\mathrm{Id}\pm P)$.
We have a decomposition of the tangent bundle analogous to that of the complex case,
$TM=T^+M\oplus T^-M$, where, this time, $T^\pm M=\mathrm{Im}p^\pm$.

A basic result is that the complementary distributions $T^\pm M$ are integrable if and only if $P$
is integrable, in the sense of having vanishing Nijenhuis torsion (see \cite{leorod89}), that is, 
$T^\pm M$ are integrable if and only if $P$ is a product structure. In this case, both $T^\pm M$ are
involutive, as we will assume. A simple computation shows that
$$
T_{p^\pm}=\frac{1}{4}T_P ,
$$
thus, in the case of a product structure we have $T_{p^-}=0$.

Of course, given a product structure on $M$ we can construct a Lie algebroid, induced by the operator
$D=\mathcal{L}_P$, applying proposition \ref{TN}. The bracket is then the contracted one,
$$
[\!|X,Y|\!]=[X,Y]_P.
$$
As we know, the Lie algebroid obtained in this way is isomorphic to the trivial one on $TM$
(recall proposition \ref{trivial}).
The results in section \ref{sec4} show that we can define another Lie algebroid structure.
\begin{thm}\label{PA}
Let $P\in\Omega^1(M;TM)$ be a product structure on the manifold $M$. There exists a Lie algebroid
structure on $M$, $(TM, q, [\![\, ,\,]\!])$, where the anchor map is 
$q=p^-=\frac{1}{2}(\mathrm{Id}-P)$,
and the bracket
$$
[\![X,Y]\!]=[X,Y]_{p^-}=\frac{1}{2}([X,Y]-[X,Y]_P).
$$
\end{thm}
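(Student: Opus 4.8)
The plan is to recognize that this is the product-structure analogue of Theorem \ref{CA} and to prove it by specializing the general construction for idempotent endomorphisms developed in Section \ref{sec4}. The key preliminary observation is that $p^-=\frac{1}{2}(\mathrm{Id}-P)$ is idempotent: expanding and using $P^2=\mathrm{Id}_{TM}$ gives $(p^-)^2=\frac{1}{4}(\mathrm{Id}-2P+P^2)=\frac{1}{2}(\mathrm{Id}-P)=p^-$. Moreover, since $P$ is a genuine product structure (and not merely almost-product), its eigendistributions $T^\pm M$ are involutive, as recalled above; in particular $\mathrm{Im}\,p^-=T^-M$ is involutive. These two facts place us precisely within the hypotheses of Theorem \ref{exampletm} with $N=p^-$.

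The first step is then to invoke Theorem \ref{exampletm} directly. It produces a Lie algebroid structure on $TM$ with anchor $q=p^-$ and bracket $[X,Y]_{p^-}+T_{p^-}(X,Y)$. The second step is to eliminate the torsion term: using the identity $T_{p^-}=\frac{1}{4}T_P$ noted just before the statement, together with the vanishing of $T_P$ for a product structure, we obtain $T_{p^-}=0$. Consequently the bracket collapses to the contracted bracket, $[\![X,Y]\!]=[X,Y]_{p^-}$, and the anchor and Leibniz properties are inherited verbatim from Theorem \ref{exampletm}.

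The final step is the short explicit identification of the contracted bracket with the closed form in the statement. Expanding $[X,Y]_{p^-}=[p^-X,Y]+[X,p^-Y]-p^-[X,Y]$ and substituting $p^-=\frac{1}{2}(\mathrm{Id}-P)$, the three $\mathrm{Id}$-contributions collect to $\frac{1}{2}[X,Y]$, while the three $P$-contributions reassemble, up to the overall factor, into $-\frac{1}{2}\big([PX,Y]+[X,PY]-P[X,Y]\big)=-\frac{1}{2}[X,Y]_P$. This yields exactly $[X,Y]_{p^-}=\frac{1}{2}([X,Y]-[X,Y]_P)$, as claimed.

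I do not expect a serious obstacle, since all the analytic content was already established in Section \ref{sec4}; the proof is essentially a verification that $P$ being a product structure supplies the two inputs that Theorem \ref{exampletm} requires. If any point deserves care, it is the justification that $\mathrm{Im}\,p^-$ is involutive and the torsion relation $T_{p^-}=\frac{1}{4}T_P$, both of which genuinely use $T_P=0$ rather than only $P^2=\mathrm{Id}$; the rest is the routine bracket computation sketched above.
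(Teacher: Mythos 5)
Your proposal is correct and follows essentially the same route as the paper, which also derives the theorem as a corollary of the Section \ref{sec4} construction (Theorem \ref{Lct}, of which Theorem \ref{exampletm} is the explicit Lie-algebroid form) and dismisses the final identity $[X,Y]_{p^-}=\frac{1}{2}([X,Y]-[X,Y]_P)$ as a trivial computation. Your version merely spells out the details the paper leaves implicit: the idempotency of $p^-$, the involutivity of $\mathrm{Im}\,p^-$ for a genuine product structure, and the vanishing of $T_{p^-}=\frac{1}{4}T_P$.
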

\begin{proof}
The statement is actually a corollary to theorem \ref{Lct}. The last equality is just a trivial computation.
\end{proof}
\begin{rem}
The theorem applies also to the case of an operator $P\in\Omega^1(M;TM)$ such that 
$P^2=\epsilon^2\mathrm{Id}_{TM}$, with $\epsilon\in\mathbb{R}-\{0\}$.
\end{rem}

%

\section{The Lie algebroid associated to a tangent structure and a connection}\label{sec8}
Let us recall some facts about the geometry of the tangent bundle (see \cite{gri72}).
Let $\pi:TM\to M$ be the tangent bundle of a manifold $M$, and let $\tau:TTM\to TM$ be the 
second tangent bundle. Then, we have a short exact sequence of vector bundles
$$
0\longrightarrow TM\times_M TM \stackrel{\iota}{\longrightarrow}TTM\stackrel{j}{\longrightarrow}
TM\times_M TM\longrightarrow 0,
$$
where
$$
\iota (u,w)=\left. \frac{\mathrm{d}}{\mathrm{d}t}(u+tw)\right|_{t=0}
$$
is the natural injection, and $j=(\tau, \pi)$.

The vertical sub-bundle $\mathcal{V}TM=\ker\pi_*$ can be expressed in either form
$$
\mathrm{Im}\iota =\mathcal{V}TM=\ker j,
$$
and the vector-valued form $J\in\Omega^1(TM;TTM)$ defined by
$$
J=\iota \circ j,
$$
is called the vertical endomorphism. It is immediate to prove the following properties:
\begin{align*}
&J^2 =0 \\
&\ker J=\mathcal{V}TM =\mathrm{Im}J\\
&T_J=\frac{1}{2}[J,J]_{FN}=0.
\end{align*}
Thus, $J$ defines a (canonical) tangent structure on $TM$. It has locally constant rank
equal to $\dim M$.
We also have the (canonical) Liouville vector field on $TM$, $C\in\mathcal{X}(TM)$, defined as
$$
C=\iota \circ \Delta ,
$$
where $\Delta :TM\to TM\times_M TM$ is the diagonal $\Delta (v)=(v,v)$. The Liouville vector
field has the property 
$$
\mathcal{L}_C J=-J.
$$

A (nonlinear) connection $\Gamma$ on $TM$ is a vector-valued form $\Gamma\in\Omega^1(TM;TTM)$ such that
\begin{equation*}
J\circ \Gamma =J=-\Gamma\circ J.
\end{equation*}
A basic property of a connection is that it defines an almost product structure on $TM$, that is,
$$
\Gamma^2 =\mathrm{Id}_{TTM},
$$
in such a way that the eigenbundle corresponding to the eigenvalue
$\lambda =-1$ is precisely the vertical distribution. In other words, if we define the projectors
\begin{align*}
h&=\frac{1}{2}(\mathrm{Id}_{TTM}+\Gamma) \\
v&=\frac{1}{2}(\mathrm{Id}_{TTM}-\Gamma) ,
\end{align*}
we have $\mathcal{V}TM=\mathrm{Im}v$, and
$TTM=\mathrm{Im}v\oplus\mathrm{Im}h$.

Notice that the vertical distribution is integrable ($\mathrm{Im}v$ is involutive), while,
in general, the horizontal distribution is not (in fact, a necessary and sufficient 
condition for this is the
vanishing of the curvature of the connection).
Thus, we expect to be able to apply the results of section \ref{sec4}
in order to construct a Lie algebroid
structure where the vector bundle is $E=TTM$, and the anchor map $v$.
\begin{thm}
Let $M$ be a manifold, and $J$ the canonical tangent structure on $TM$. Let $\Gamma\in\Omega^1(TM;TTM)$
be a connection on $TM$. Then, there exists a Lie algebroid structure on $TM$, $(TTM,q,[\![\, ,\,]\!])$,
where the anchor map is $q=v=\frac{1}{2}(\mathrm{Id}_{TTM}-\Gamma)$ and the bracket
\begin{equation}\label{last}
[\![A,B]\!]=\frac{1}{2}\left( [A,B]-[A,B]_\Gamma \right)+\frac{1}{4}T_\Gamma (A,B).
\end{equation}
\end{thm}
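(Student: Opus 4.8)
The plan is to recognize this theorem as a direct application of Theorem \ref{exampletm} to the manifold $TM$ (the total space of the tangent bundle), with the idempotent endomorphism taken to be the vertical projector $v$. First I would verify the two hypotheses of Theorem \ref{exampletm} for $N=v\in\Omega^1(TM;TTM)$. Idempotency is immediate: since $\Gamma^2=\mathrm{Id}_{TTM}$, the operators $h$ and $v$ are complementary projectors, whence $v^2=v$. For involutivity, I would observe that $\mathrm{Im}\,v=\mathcal{V}TM$ is the canonical vertical distribution, which is \emph{always} integrable. This is precisely why we select $v$ (and not $h$) as the candidate anchor: the horizontal distribution $\mathrm{Im}\,h$ need not be involutive unless the curvature of $\Gamma$ vanishes, whereas the requirement of Theorem \ref{exampletm} is only on the image of the anchor.

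Granting these two checks, Theorem \ref{exampletm} immediately produces a Lie algebroid structure $(TTM,q,[\![\,,\,]\!])$ with anchor $q=v$ and intrinsic bracket
$$
[\![A,B]\!]=[A,B]_v+T_v(A,B).
$$
The remaining task is purely computational: to rewrite this bracket in terms of $\Gamma$ so as to match \eqref{last}.

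For the contracted-bracket term, I would substitute $v=\frac{1}{2}(\mathrm{Id}_{TTM}-\Gamma)$ into the definition \eqref{eq2} of $[\cdot,\cdot]_v$ and collect terms; a short rearrangement yields
$$
[A,B]_v=\tfrac{1}{2}\left([A,B]-[A,B]_\Gamma\right),
$$
which reproduces the first summand of \eqref{last}. For the torsion term, I would invoke the identity established in Section \ref{sec7} for an arbitrary almost-product structure, namely that the negative-eigenvalue projector $p^-=\frac{1}{2}(\mathrm{Id}-P)$ satisfies $T_{p^-}=\frac{1}{4}T_P$. Applying this with $P=\Gamma$ and $p^-=v$ gives $T_v=\frac{1}{4}T_\Gamma$, the second summand of \eqref{last}. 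Adding the two identities reproduces the stated bracket exactly.

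The only genuinely nontrivial input is the involutivity of $\mathrm{Im}\,v$, and that is free here because it coincides with $\mathcal{V}TM=\ker\pi_*$, integrable independently of the chosen connection. Once this observation is in place, I expect no serious obstacle: the remaining work is the expansion of $[\cdot,\cdot]_v$ and the reuse of the already-proven relation $T_{p^-}=\frac{1}{4}T_P$. The main care required is bookkeeping of signs and of the factors of $\frac{1}{2}$ arising when one passes between $\Gamma$, $v$, and $h$.
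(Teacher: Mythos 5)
Your proposal is correct and follows essentially the same route as the paper: the paper likewise reduces the statement to the idempotent-endomorphism result of Section \ref{sec4} (citing Theorem \ref{Lct}, of which Theorem \ref{exampletm} is the explicit Lie-algebroid form you invoke), and then performs exactly the two computations you describe, namely $[A,B]_v=\tfrac{1}{2}\left([A,B]-[A,B]_\Gamma\right)$ and $T_v=\tfrac{1}{4}T_\Gamma$. Your explicit remark that $\mathrm{Im}\,v=\mathcal{V}TM$ is involutive independently of $\Gamma$ is the correct justification of the hypothesis, which the paper states in the surrounding discussion rather than inside the proof itself.
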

\begin{proof}
This is just a corollary to theorem \ref{Lct}. Simply notice that, as a quick calculation shows,
$$
T_v(A,B)=\frac{1}{4}T_\Gamma (A,B),
$$
for all $A,B\in\mathcal{X}(TM)$, and 
\begin{align*}
[A,B]_v =& [vA,B]+[A,vB]-v[A,B] \\
		=& \frac{1}{2}([A-\Gamma A,B]+[A,B-\Gamma B]-[A,B]+\Gamma [A,B]) \\
		=& \frac{1}{2}([A,B]-[\Gamma A,B]-[A,\Gamma B]+\Gamma [A,B]) \\
		=& \frac{1}{2}([A,B]-[A,B]_\Gamma ).
\end{align*}
\end{proof}
The easiest method for obtaining connections on $TM$ is to choose a semispray.
A semispray (or a second-order differential equation, see \cite{APS60}) on $M$ is a vector field 
on $TM$, $S\in\mathcal{X}(TM)$, such that it is also a section of the second tangent bundle
$TTM\to TM$. Vector fields $S\in\mathcal{X}(TM)$ which are semisprays can be characterized
with the aid of the canonical structures on $TM$ as those satisfying
\begin{equation}\label{jsc}
J\circ S=C.
\end{equation}

It is well known (see \cite{APS60,leorod89}) that for any semispray $S$, 
the endomorphism $\Gamma=-\mathcal{L}_S J$ is a connection
on $TM$, with associated projectors
\begin{align*}
h&=\frac{1}{2}(\mathrm{Id}_{TTM}-\mathcal{L}_S J) \\
v&=\frac{1}{2}(\mathrm{Id}_{TTM}+\mathcal{L}_S J).
\end{align*}
\begin{rem}
This intimate relation between semisprays and connections has
been used in Physics to study such topics as the inverse problem
for Lagrangian dynamics \cite{cra81}, degenerate Lagrangian systems \cite{can86}, or
the geometry and mechanics of higher order tangent bundles \cite{amr91}.
\end{rem}
We have the following result.
\begin{thm}
Let $M$ be a manifold, and $J$ the canonical tangent structure on $TM$. Let $S\in\mathcal{X}(TM)$ be a semispray on 
$M$. Then, there exists a Lie algebroid structure on $TM$, $(TTM,q,[\![\, ,\,]\!])$, where the anchor map is
$q=v=\frac{1}{2}(\mathrm{Id}_{TTM}+\mathcal{L}_S J)$ and the bracket
\begin{equation}\label{vlast}
[\![A,B]\!]=\frac{1}{2}\left([A,B]-\underset{A,B,S}{\circlearrowright}[A,\mathcal{L}_S B]_J\right)+\frac{1}{4}T_{\mathcal{L}_S J}(A,B).
\end{equation}
\end{thm}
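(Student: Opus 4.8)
The plan is to obtain this as a corollary of the preceding theorem, specializing the connection to the one canonically produced by the semispray. First I would invoke the classical fact (see \cite{APS60,leorod89}) that for any semispray $S$ the endomorphism $\Gamma=-\mathcal{L}_S J$ is a connection on $TM$, with vertical projector $v=\frac{1}{2}(\mathrm{Id}_{TTM}+\mathcal{L}_S J)$; this is precisely the input needed to apply the preceding theorem (the one treating a general connection $\Gamma$ on $TM$) to $\Gamma=-\mathcal{L}_S J$. The anchor then comes out for free as $q=v=\frac{1}{2}(\mathrm{Id}_{TTM}+\mathcal{L}_S J)$, matching the statement, and it only remains to transform the bracket produced there, namely $[\![A,B]\!]=\frac{1}{2}([A,B]-[A,B]_\Gamma)+\frac{1}{4}T_\Gamma(A,B)$, into \eqref{vlast}.

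Two of the three pieces are immediate. The torsion term requires only the observation that the Nijenhuis torsion $T_N=\frac{1}{2}[N,N]_{FN}$ is quadratic in $N$, so the sign in $\Gamma=-\mathcal{L}_S J$ cancels and $T_\Gamma=T_{-\mathcal{L}_S J}=T_{\mathcal{L}_S J}$; hence $\frac{1}{4}T_\Gamma=\frac{1}{4}T_{\mathcal{L}_S J}$, the last summand of \eqref{vlast}, while the leading term $\frac{1}{2}[A,B]$ is untouched. Thus everything reduces to rewriting the contracted bracket $[A,B]_\Gamma=[A,B]_{-\mathcal{L}_S J}$ as the displayed cyclic sum.

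For this, the key step I would establish is a Leibniz-type identity for the contracted bracket under a Lie derivative of the endomorphism,
\begin{equation*}
[A,B]_{\mathcal{L}_S J}=[S,[A,B]_J]-[[S,A],B]_J-[A,[S,B]]_J,
\end{equation*}
which follows by expanding $(\mathcal{L}_S J)(X)=[S,JX]-J[S,X]$ and repeatedly applying the Jacobi identity for the Lie bracket of vector fields on $TM$; no special property of $J$ is needed here. Taking the overall sign, $[A,B]_\Gamma=-[A,B]_{\mathcal{L}_S J}$, and using the skew-symmetry of $[\cdot,\cdot]_J$ together with $[A,S]=-[S,A]$ and $[B,A]=-[A,B]$, the two off-diagonal terms reorganize into $[A,[S,B]]_J+[B,[A,S]]_J$, i.e.\ two of the three cyclic contributions. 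The remaining term must then be brought to the form $[S,[B,A]]_J$ by invoking the structural relations of the canonical tangent structure and of the semispray, namely $J\circ S=C$, $J^2=0$ (equivalently $T_J=0$), and $\mathcal{L}_C J=-J$. Assembling the three terms gives $[A,B]_\Gamma=\underset{A,B,S}{\circlearrowright}[A,\mathcal{L}_S B]_J$, and substituting into $\frac{1}{2}([A,B]-[A,B]_\Gamma)$ yields \eqref{vlast}.

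I expect this last reorganization to be the main obstacle, and the one place where the semispray hypothesis is genuinely used: unlike the arbitrary-connection theorem, here the Liouville field $C=J\circ S$ appears as soon as $S$ enters the cyclic slot, which forces the use of $\mathcal{L}_C J=-J$, while the collapse of the superfluous terms relies on $J^2=0$. Carrying this out is essentially bookkeeping with the Jacobi identity and these three structural relations, and it is precisely at this step that one must be most careful with signs.
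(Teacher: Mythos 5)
Your overall strategy is the same as the paper's: treat the statement as a corollary of the preceding theorem by specializing to the connection $\Gamma=-\mathcal{L}_S J$, so that the anchor is immediate, the torsion term follows from $T_{-\mathcal{L}_S J}=T_{\mathcal{L}_S J}$, and everything reduces to identifying $[A,B]_\Gamma$ with the cyclic sum in \eqref{vlast}. Your Leibniz identity $[A,B]_{\mathcal{L}_S J}=[S,[A,B]_J]-[[S,A],B]_J-[A,[S,B]]_J$ is correct (it is the naturality of $\mathcal{L}_S$ applied to the contracted bracket), and rewriting the two off-diagonal terms as $[A,[S,B]]_J+[B,[A,S]]_J$ by skew-symmetry is also correct. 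The paper instead rewrites $\Gamma X=[C,X]-[S,X]_J$ and expands directly, but the two routes agree up to this point.

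The gap is exactly the step you defer. Turning the remaining term $-[S,[A,B]_J]$ into $[S,[B,A]]_J$ amounts to the identity $[S,[A,B]_J]=[S,[A,B]]_J$, and this is false in general; no combination of $J\circ S=C$, $J^2=0$ and $\mathcal{L}_C J=-J$ repairs it. Take $M=\mathbb{R}$ with coordinates $(x,y)$ on $TM$, so $J\partial_x=\partial_y$, $J\partial_y=0$, $C=y\partial_y$, the flat spray $S=y\partial_x$, and $A=x\partial_x$, $B=\partial_x$. Then $[A,B]_J=[x\partial_y,\partial_x]+[x\partial_x,\partial_y]+J\partial_x=0$, so $[S,[A,B]_J]=0$, whereas $[S,[A,B]]_J=-[y\partial_x,\partial_x]_J=-\bigl([y\partial_y,\partial_x]+[y\partial_x,\partial_y]\bigr)=\partial_x$. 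In this example one finds $[A,B]_\Gamma=-\partial_x$ while the cyclic sum $\underset{A,B,S}{\circlearrowright}[A,\mathcal{L}_S B]_J$ equals $-2\partial_x$, so \eqref{last} gives $[\![A,B]\!]=0$ (consistent with the direct formula \eqref{bracketfull}, since $vA=0=vB$), but \eqref{vlast} evaluates to $\tfrac{1}{2}\partial_x$; the defect is precisely $\tfrac{1}{2}\bigl([S,[A,B]]_J-[S,[A,B]_J]\bigr)$. So the step you flag as ``the main obstacle'' is not bookkeeping: it is where the argument breaks, and it indicates that \eqref{vlast}, read literally, does not coincide with the bracket \eqref{last} produced by the general-connection theorem (the paper's own proof conceals this behind ``a long but otherwise straightforward computation''). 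A correct conclusion of your argument is $[\![A,B]\!]=\frac{1}{2}\bigl([A,B]-[[S,A],B]_J-[A,[S,B]]_J+[S,[A,B]_J]\bigr)+\frac{1}{4}T_{\mathcal{L}_S J}(A,B)$; you should either stop there or determine what cyclic expression the authors actually intended.
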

\begin{proof}
Rewrite \eqref{last} taking into account \eqref{jsc} so, for any $X\in\mathcal{X}(TM)$,
\begin{align*}
(\mathcal{L}_S J)X=& [S,JX]-J[S,X] \\
				  =& [S,X]_J-[JS,X] \\
				  =& [S,X]_J-[C,X].
\end{align*}
That is, $\Gamma X=[C,X]-[S,X]_J$. A long but otherwise straightforward computation leads to \eqref{vlast}.
\end{proof}


\section*{Acknowledgments} The authors express their gratitude to J. V. Beltr\'an and J. Monterde 
(both at the Universitat de Val\`encia, Spain), for many useful discussions.



\end{document}